\newtheorem{theorem}{Theorem}[section]
\newtheorem{remark}{Remark}[section]
\newtheorem{definition}{Defination}[section]
\newtheorem{proposition}{Proposition}[section]
\title{Application of an unbalanced optimal transport distance and a mixed L1/Wasserstein distance to full waveform inversion}
\author{Da Li, Michael P. Lamoureux, Wenyuan Liao}
\date{}
\begin{document}

\maketitle

\tableofcontents

\begin{abstract}

Full waveform inversion (FWI) is an important and popular technique in subsurface earth property estimation. However, using the least-squares norm in the misfit function often leads to the local minimum solution of the optimization problem, and this phenomenon can be explained with the cycle-skipping artifact. Several methods that apply optimal transport distances to mitigate the cycle-skipping artifact have been proposed recently. The optimal transport distance is designed to compare two probability measures. To overcome the mass equality limit, we introduce an unbalanced optimal transport (UOT) distance with Kullback-Leibler divergence to balance the mass difference. Also, a mixed L1/Wasserstein distance is constructed that can preserve the convex properties with respect to shift, dilation, and amplitude change operation. An entropy regularization approach and scaling algorithms are used to compute the distance and the gradient efficiently. Two strategies of normalization methods that transform the seismic signals into non-negative functions are provided. Numerical examples are provided to demonstrate the efficiency and effectiveness of the new method.

\end{abstract}

\section{Introduction}

Full waveform inversion (FWI) is a high resolution seismic imaging technique that is designed to recover subsurface structure and physical properties from seismic data.
It was proposed by Lailly \cite{lailly1983seismic} and Tarantola \cite{tarantola1984inversion} in the early 1980s and has been gaining popularity with the improvement of the computing capacity.
From mathematical point of view, FWI is a nonlinear PDE-constrained optimization problem with subsurface physical properties such as velocity and density as the control parameters, and the waveform received by the receivers as the state parameters \cite{fichtner2010full}.
Depending on different physical model, the constraint PDE can be wave equation, acoustic wave equation or elastic wave equation with proper boundary conditions or techniques to simulate wave propagating in an unbounded domain \cite{fichtner2010full}.
Because of the huge size of the problem, gradient based optimization methods such as gradient descent, nonlinear conjugate gradient method, L-BFGS and inexact Newton's method are usually implemented.
The gradient of misfit function generally can be calculated by the adjoint state method \cite{plessix2006review}.

In conventional methods, the L2 distance is used in the misfit function during optimization to minimize the difference between observed and synthetic data \cite{fichtner2010full}.
As a nonlinear optimization problem, FWI algorithm suffers the existence of local minima.
One of the reasons causing the local minima is the cycle-skipping artifact, which occurs as the phase difference between two seismic signals is larger then half of wavelength \cite{virieux2009overview}.
To mitigate this problem, the optimal transport (OT) distance or so called Wasserstein distance has been introduced to describe the difference between seismic signals \cite{engquist2013application} and later it has been applied to the seismic imaging \cite{engquist2018seismic, yong2019least} and FWI problem \cite{engquist2016optimal}.
Although it requires certain prerequisites such as mass equality and normalization, the OT distance has the convexity property with respect to shift, dilation, and amplitude change of signals \cite{engquist2016optimal} which is one of the main motivations of introducing OT distance to the FWI problem.
The convexity with respect to shift operation is demonstrated by Figure \ref{fig:ch1_1}.

\begin{figure}[h]
  \centering
  \includegraphics[width=1\textwidth]{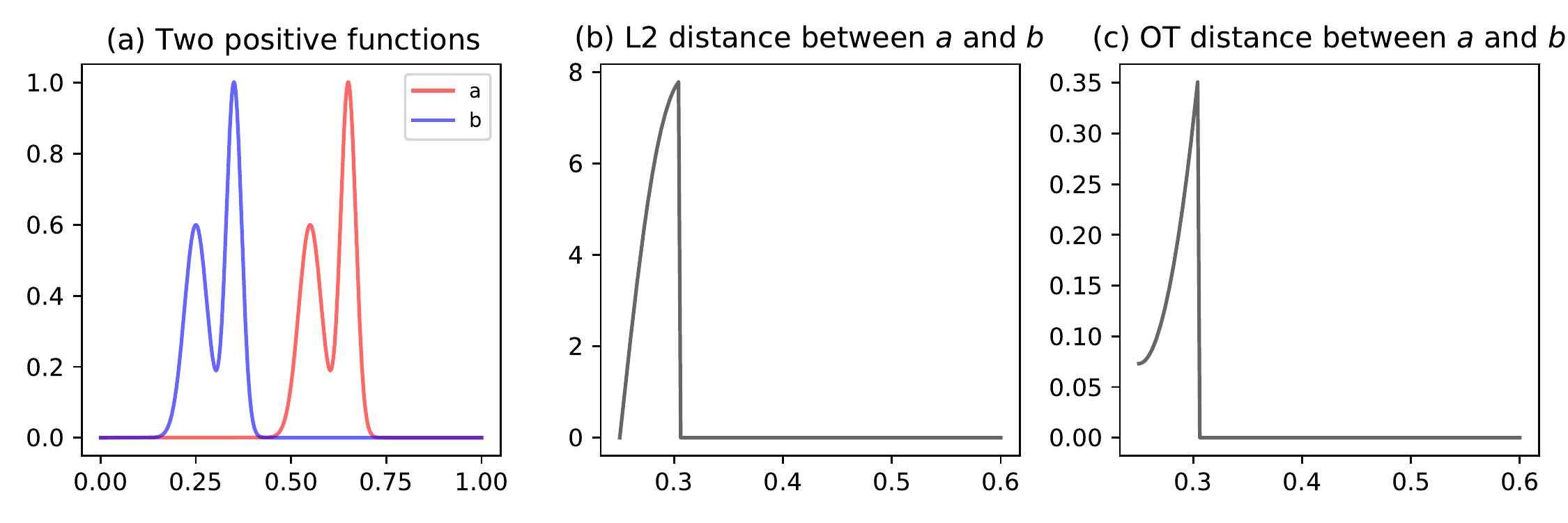}
  \caption{(a): Two positive signals $a$ and $b$. The position of $b$ is fixed. (b): L2 distance between $a$ and $b$ as $a$ is shifted from left to right. The cycle-skipping artifact exists at the local minimum near $0.35$. (c): Optimal transport distance between $a$ and $b$ as $a$ is shifted from left to right. Only a global minimum exists.}
  \label{fig:ch1_1}
\end{figure}

The optimal transport distance is designed to describe the difference between two positive finite measures with equal total mass.
Despite of the promising properties, the seismic signal is oscillating around $0$ and usually the mass equality condition is not satisfied.
Several works have been proposed to overcome those restrictions and integrate the OT distance to FWI problem.
In the first strategy, the non-negative and equal mass restrictions are overcame by connecting the 1-Wasserstein distance to KR norm \cite{bogachev2007measure} with the dual form of the Kantorovich problem.
And then the distance is computed by a proximal splitting strategy called simultaneous descent method of multipliers (SDMM) \cite{metivier2016measuring, metivier2016optimal}.
In \cite{yong2019misfit}, the 1-Wasserstein distance is evaluated through the dynamic formulation \cite{benamou2002monge}, and then solved by a primal-dual hybrid gradient method (PDHG) with line search.
Another strategy is to normalize the seismic signals into positive functions with equal mass first, then compute the OT distance.
In \cite{qiu2017mitigating, qiu2017full, yang2017analysis, yang2018application}, the seismic signals are normalized into positive functions with equal mass through normalization methods such as linear, quadratic and exponential functions.
Then the 2-Wasserstein distance between seismic signals can be evaluated either through a trace-by-trace technique or through numerically computation of Monge-Amp\`ere equation for 2D seismogram.
Furthermore, a graph space approach has been designed for comparing two signed signals with the optimal transport distance \cite{metivier2019graph}.

To overcome the equal mass limitation, the unbalanced optimal transport (UOT) problem is raised in \cite{benamou2003numerical} based on a dynamic approach.
Later several works have been proposed in both static and dynamic approaches \cite{piccoli2014generalized, chizat2018unbalanced, chizat2018scaling}. 
In this paper we introduce the UOT distance mainly based on the work in \cite{chizat2018scaling} to the FWI problem.
In addition to the UOT distance, a mixed L1/Wasserstein distance is constructed and then introduced to the FWI problem.
We prove that the proposed mixed distance inherits the convex properties with respect to shift and dilation of the OT distance.
Linear and exponential normalization methods have been studied to transform the signed signals into positive functions.
Numerical examples show that the UOT distance and mixed distance lead to a more convex behavior with respect to time shift of signal under proper normalization compared to the L2 distance and 1-Wasserstein distance in \cite{metivier2016optimal} and \cite{yong2019misfit}.
Compared to the 2-Wasserstein trace-by-trace strategy in \cite{yang2018application}, the UOT distance and mixed L1/Wasserstein distance provide more consistent adjoint source.
During the optimization procedure, the misfit function with UOT distance and mixed L1/Wasserstein distance provide more accurate update step compared to the L2 distance when a poor initial model is used.

The rest of this paper is organized as the follows.
In Section 2, we first give a general review of optimal transport problem. Then the UOT distance and mixed L1/Wasserstein distance are introduced.
Section 3 illustrates the normalization methods that transform the signals into positive functions.
Numerical examples are provided to demonstrate the convex behavior of the UOT distance and mixed L1/Wasserstein distance with the normalizations.
Section 4 provides the methodology of introducing UOT distance and mixed L1/Wasserstein distance to the FWI problem.
Three FWI numerical examples are shown in Section 5 to compare the inverse results with L2 distance, UOT distance, and mixed L1/Wasserstein distance.

\section{Optimal transport based distances}

In this section, we provide a short review of optimal transport problem and optimal transport distance in discrete form first.
Then, the optimal transport distance is evaluated through an entropy regularization approach and the Sinkhorn algorithm.
The unbalanced optimal transport distance used in this work is mainly based on the work \cite{chizat2018scaling}, and the similar entropy regularization approach is used for the numerical evaluation.
To maintain the convex properties with respect to shift and dilation operation, a mixed L1/Wasserstein distance is constructed in the last subsection.

The optimal transport problem is defined for probability measures.
Define the probability simplex as
\begin{align}
  \Sigma_n = \left\{ a\in\mathbb{R}_+^n \ \middle| \ \sum_{i=1}^n a_i = 1 \right\}.
\end{align}
Given two vectors $a, b \in \mathbb{R}^n$, the indicator function between $a$ and $b$ is defined as
\begin{align}
  \iota_{=}(a|b) = \begin{cases}
    0, \quad &a=b, \\ \infty, \quad &a\neq b.
  \end{cases}
\end{align}
Given a vector $a\in\mathbb{R}^n_+$, the entropy function is defined as:
\begin{align}
  E(a) = - \sum_{i=1}^n a_i (\log(a_i) - 1).
\end{align}
Given vector $a, b \in \mathbb{R}^n_+$, the Kullback-Leibler (KL) divergence is defined as:
\begin{align} \label{eq:KL_divergence}
  KL(a | b) = \sum_{i=1}^n a_i \log\left(\frac{a_i}{b_i}\right) - a_i + b_i.
\end{align}

\subsection{Optimal transport problem and entropy regularization}
The optimal transport problem has a long history and dates back to the 18th century \cite{monge1781memoire}.
The modern formulation is given by Kantorovich \cite{kantorovich2006translocation}.
Please refer to \cite{villani2008optimal,santambrogio2015optimal} for a comprehensive review.

We focus on the discrete measures in this work.
Denote the set of sampling points as $X = \{x_1, \cdots, x_n\} \in (\mathbb{R}^d)^n$ and $Y = \{y_1, \cdots, y_m\}  \in (\mathbb{R}^d)^m$, $d \in \mathbb{N}_+$.
Define two probability measures as:
\begin{align}\label{eq:two_discrete_measures}
  \alpha = \sum_{i=1}^n a_i \delta_{x_i}, \quad \beta = \sum_{i=1}^m b_i \delta_{y_i},
\end{align}
where $a\in \Sigma_n$, $b \in \Sigma_m$.
We use the matrix $P \in \mathbb{R}_{+}^{n \times m}$ to represent the transport plan in the discrete form.
The set of the transport plans in the discrete form is given by:
\begin{align}\label{eq:discrete_Kantorovich_feasible_set}
  \Pi_d(\alpha, \beta) = \left\{ P \in \mathbb{R}_{+}^{n \times m} \ \middle| \ P \bm{1}_m = a \ \text{and} \ P^{'} \bm{1}_n = b \right\},
\end{align}
where $\bm{1}_n$ is the row vector with $n$ entries and every entry is $1$.
Next, define the cost matrix $C \in \mathbb{R}^{n \times m}$ as
\begin{align}\label{eq:cost_matrix}
  C_{i,j} = c(x_i,y_j) = |x_i - y_j|^2,
\end{align}
where the distance function $c : X \times Y \to \mathbb{R}\cup \{+\infty\}$ is the square of Euclidean distance.

Given two discrete probability measures $\alpha$ and $\beta$ in $\mathbb{R}^d$ as \eqref{eq:two_discrete_measures}, and the cost matrix $C$ is defined by equation \eqref{eq:cost_matrix}.
The discrete Kantorovich problem is
\begin{align}\label{eq:discrete_Kantorovich}
  \min_{P \in \Pi_d(\alpha, \beta)} \left< P, C \right> = \sum_{i,j} C_{i,j} P_{i,j},
\end{align}
where the set of transport plan $\Pi_d(\alpha, \beta)$ is given by equation \eqref{eq:discrete_Kantorovich_feasible_set}.
Then, the square of $2$-Wasserstein distance between $\alpha$ and $\beta$ is defined as
\begin{align}\label{def:p_Wasserstein_distance}
  W_2^2(\alpha, \beta) = \min_{P \in \Pi_d(\alpha, \beta)} \left< P, C \right>.
\end{align}
When the set $X$ and $Y$ are fixed, the square of $2$-Wasserstein distance is with respect to vector $a$ and $b$ that can be written as $W_2^2(a,b)$.

The dual Kantorovich problem is given by
\begin{align}\label{eq:discrete_dual_problem}
  \max_{(\phi, \psi) \in R_C} \phi^{'} a + \psi^{'} b,
\end{align}
where the polyhedron $R_C$ of dual variables is
\begin{align}\label{eq:feasible_set_of_discrete_dual}
  R_C = \left\{ (\phi, \psi) \in \mathbb{R}^{n} \times \mathbb{R}^{m} \ \middle| \ \phi_i + \psi_j \leq C_{i,j}, \ 1 \leq i \leq n,\  1 \leq j \leq m \right\}.
\end{align}
The subdifferentiability of $W_2^2$ as a function of $a$ is given by the following proposition.
\begin{proposition}[\cite{cuturi2014fast}, Proposition 1]\label{prop:subdifferentiability}
  Given two discrete probability measure $\alpha$ and $\beta$ as equation \eqref{eq:two_discrete_measures}, and the cost matrix $C$ is defined by equation \eqref{eq:cost_matrix}.
  Any optimal dual variable $\phi$ of the dual problem \eqref{eq:discrete_dual_problem} is a subgradient of $W_2^2$ with respect to $a$.
\end{proposition}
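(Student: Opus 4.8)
The plan is to combine linear-programming duality with the elementary first-order characterization of subgradients of a convex function. Fix the supports $X$, $Y$ and the measure $\beta$ (equivalently $b\in\Sigma_m$), and regard $a\mapsto W_2^2(a,b)$ as a function of $a$ on $\Sigma_n$ (with the dual formula it extends to $a\in\mathbb{R}^n_+$).

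First I would record that the primal problem \eqref{eq:discrete_Kantorovich} is a finite linear program whose value is attained and finite: the feasible polytope $\Pi_d(\alpha,\beta)$ is nonempty, since it contains the rank-one plan $ab'$, whose row sums equal $a$ and column sums equal $b$ because $\sum_j b_j=1$ and $\sum_i a_i=1$, and it is bounded, while the objective $\langle P,C\rangle$ is linear with finite coefficients $C_{i,j}=|x_i-y_j|^2$. By strong LP duality the primal value therefore equals the value of the dual \eqref{eq:discrete_dual_problem}, the dual optimum is attained, and $W_2^2(a,b)=\max_{(\phi,\psi)\in R_C}\phi' a+\psi' b$ with a nonempty set of optimal $\phi$.

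The key observation is that the dual feasible set $R_C$ in \eqref{eq:feasible_set_of_discrete_dual} does not depend on $a$: it is cut out purely by the constraints $\phi_i+\psi_j\le C_{i,j}$. Hence $W_2^2(\cdot,b)$ is a pointwise supremum of the affine functions $a\mapsto\phi' a+\psi' b$ indexed by $(\phi,\psi)\in R_C$, so in particular it is convex. Now fix $a_0$ and let $(\phi_0,\psi_0)$ be any optimal dual pair at $a_0$, so $W_2^2(a_0,b)=\phi_0' a_0+\psi_0' b$. For an arbitrary admissible $a$, feasibility of $(\phi_0,\psi_0)$ gives
\begin{align}
  W_2^2(a,b) \ \ge\ \phi_0' a + \psi_0' b
  \ =\ \big(\phi_0' a_0 + \psi_0' b\big) + \phi_0'(a-a_0)
  \ =\ W_2^2(a_0,b) + \langle \phi_0,\, a-a_0\rangle ,
\end{align}
which is exactly the subgradient inequality, so $\phi_0\in\partial_a W_2^2(a_0,b)$.

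I do not expect a real obstacle; the only point deserving care is the appeal to strong duality, i.e. confirming that the primal is feasible with finite value so that there is no duality gap and a dual optimizer exists, which is why it is worth exhibiting the rank-one feasible plan and noting the finiteness of $C$ explicitly. One could alternatively phrase the final step as an instance of Danskin's theorem for the maximum of a family of affine functions, but the one-line inequality above is self-contained and needs no differentiability assumptions.
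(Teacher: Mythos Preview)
Your argument is correct and is the standard route: LP strong duality gives $W_2^2(a,b)=\max_{(\phi,\psi)\in R_C}\phi'a+\psi'b$, the dual polyhedron $R_C$ is independent of $a$, so $W_2^2(\cdot,b)$ is a pointwise maximum of affine functions and any maximizer $\phi_0$ satisfies the subgradient inequality exactly as you wrote. The paper itself does not prove this proposition but merely cites it from \cite{cuturi2014fast}; the proof there is essentially the one you have given, so there is nothing further to compare.
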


The following convex properties with respect to shift and dilation operation are provided in \cite{yang2018optimal} and \cite{li2021novel}.

\begin{theorem} \label{thm:convex_wrt_shift}
  Suppose $\alpha$ and $\beta$ are two discrete probability measures defined by equation \eqref{eq:two_discrete_measures}, 
  let $P \in \Pi_d(\alpha, \beta)$ be the discrete optimal transport plan that rearranges $\alpha$ to $\beta$.
  The shift of discrete measure $\alpha$ with the direction $\eta \in \mathbb{R}^{d} $ and shift size $s>0$ is defined as
  \begin{align}
    \alpha_s = \sum_{i=1}^n a_i \delta_{x_i + s \eta}.
  \end{align}
  Then $W_2^2(\alpha_s, \beta)$ is convex with respect to the shift size $s$.
\end{theorem}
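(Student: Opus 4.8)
The plan is to show that, as a function of the shift size $s$, the quantity $W_2^2(\alpha_s,\beta)$ is an explicit quadratic polynomial with nonnegative leading coefficient; convexity then follows at once. The key preliminary observation is that the admissible set of transport plans in \eqref{eq:discrete_Kantorovich_feasible_set} depends only on the marginal weight vectors $a$ and $b$, not on the support points $X$ and $Y$. Since the shifted measure $\alpha_s=\sum_i a_i\delta_{x_i+s\eta}$ carries exactly the same weights $a_i$ as $\alpha$, we have $\Pi_d(\alpha_s,\beta)=\Pi_d(\alpha,\beta)$ for every $s$, so the only $s$-dependent object in the problem is the cost matrix, which becomes $C^{(s)}_{i,j}=|x_i+s\eta-y_j|^2$.

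Next I would expand this cost. Writing $|x_i+s\eta-y_j|^2=|x_i-y_j|^2+2s\,\eta\cdot(x_i-y_j)+s^2|\eta|^2$ and pairing against an arbitrary $Q\in\Pi_d(\alpha,\beta)$, the marginal constraints $Q\bm{1}_m=a$ and $Q^{'}\bm{1}_n=b$ give $\sum_{i,j}Q_{i,j}\,\eta\cdot x_i=\eta\cdot\sum_i a_i x_i$, $\sum_{i,j}Q_{i,j}\,\eta\cdot y_j=\eta\cdot\sum_j b_j y_j$, and $\sum_{i,j}Q_{i,j}=1$. Hence
\[
  \langle Q, C^{(s)}\rangle = \langle Q, C\rangle + 2s\,\eta\cdot\left(\sum_i a_i x_i - \sum_j b_j y_j\right) + s^2|\eta|^2 .
\]
The two $s$-dependent terms on the right are completely independent of $Q$.

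Taking the minimum over $Q\in\Pi_d(\alpha,\beta)$ therefore affects only the term $\langle Q,C\rangle$, so that
\[
  W_2^2(\alpha_s,\beta) = W_2^2(\alpha,\beta) + 2s\,\eta\cdot\left(\sum_i a_i x_i - \sum_j b_j y_j\right) + s^2|\eta|^2 ,
\]
and, as a byproduct, the optimal plan $P$ for $(\alpha,\beta)$ is seen to remain optimal for every $(\alpha_s,\beta)$. The right-hand side is a quadratic in $s$ with leading coefficient $|\eta|^2\ge 0$, hence convex in $s$ (strictly so when $\eta\neq 0$), which is the assertion.

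I do not expect a genuine obstacle here: the argument is a single short computation. The only point that requires care is the first step — verifying that the feasible polyhedron does not move when the support of $\alpha$ is translated — which is immediate from \eqref{eq:discrete_Kantorovich_feasible_set} but is precisely the structural reason why a shift is so much more tractable than, say, a reweighting of $\alpha$. If one preferred to avoid the closed-form expression, it would also suffice to note that the fixed plan $P$ is feasible for all $s$, giving $W_2^2(\alpha_s,\beta)\le\langle P,C^{(s)}\rangle$, and then combine this with the marginal identity above to obtain equality; but the explicit quadratic is cleaner and additionally recovers the stronger ``same optimal plan'' statement implicit in the theorem.
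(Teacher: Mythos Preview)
Your argument is correct. The feasible set $\Pi_d(\alpha_s,\beta)$ indeed depends only on the weight vectors, the cost expansion is accurate, and the marginal constraints reduce the $s$-dependent terms to quantities independent of the plan $Q$; the resulting explicit quadratic $W_2^2(\alpha_s,\beta)=W_2^2(\alpha,\beta)+2s\,\eta\cdot(\sum_i a_i x_i-\sum_j b_j y_j)+s^2|\eta|^2$ settles convexity immediately and also yields the ``same optimal plan for all $s$'' byproduct you mention.

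As for comparison: the paper does not actually supply a proof of this theorem. It states the result and attributes it to the references \cite{yang2018optimal} and \cite{li2021novel}. Your computation is the standard route taken in that literature (separate the $s$-dependent part of the cost and observe that the marginal constraints make it plan-independent), so there is no substantive divergence to discuss. If anything, your write-up is slightly sharper than what one usually sees, since you extract the closed-form quadratic rather than merely bounding $W_2^2(\alpha_s,\beta)$ above by $\langle P,C^{(s)}\rangle$ and appealing to convexity of a pointwise infimum of affine functions.
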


\begin{theorem} \label{thm:convex_wrt_dilation}
  Given $a \in \Sigma_n$ and a discrete set $X = \{x_1, \cdots, x_n\} \in (\mathbb{R}^d)^n$, two discrete probability measures are defined by
  \begin{align*}
    \alpha = \sum_{i=1}^n a_i \delta_{x_i}, \quad \alpha_A = \sum_{i=1}^n a_i \delta_{Ax_i}.
  \end{align*}
  Here $A$ is a dilation transform matrix which is symmetric positive definite, and $\alpha_A$ is the dilation of $\alpha$. 
  Let $P \in \Pi_d(\alpha_A, \alpha)$ be the discrete transport plan that rearranges $\alpha_A$ to $\alpha$.
  Then, $P$ is the optimal transport plan with $P = \text{diag} (a)$, and $W_2^2(\alpha_A, \alpha)$ is convex with respect to the eigenvalues of $A$.
\end{theorem}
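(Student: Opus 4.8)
The plan is to first pin down the optimal transport plan explicitly, then reduce $W_2^2(\alpha_A,\alpha)$ to a closed form from which convexity is immediate. The candidate is $P=\mathrm{diag}(a)$, the plan that moves mass $a_i$ from $Ax_i$ to $x_i$. Since both $\alpha_A$ and $\alpha$ carry the same weight vector $a$, one checks at once that $P\bm{1}_n=a$ and $P^{'}\bm{1}_n=a$, so $P\in\Pi_d(\alpha_A,\alpha)$; the only real work is to show that this $P$ is optimal for every symmetric positive definite $A$.

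For optimality I would exhibit a dual certificate for the dual Kantorovich problem \eqref{eq:discrete_dual_problem}. Writing $c(x,y)=|x-y|^2$ from \eqref{eq:cost_matrix} and expanding $|Ax_i-x_j|^2=|Ax_i|^2-2\langle Ax_i,x_j\rangle+|x_j|^2$, the constraint $\phi_i+\psi_j\le C_{i,j}$ becomes the Fenchel--Young inequality $f(Ax_i)+f^{*}(x_j)\ge\langle Ax_i,x_j\rangle$ once we set $\phi_i=|Ax_i|^2-2f(Ax_i)$ and $\psi_j=|x_j|^2-2f^{*}(x_j)$ for a convex function $f$ with Legendre transform $f^{*}$. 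Because $A$ is symmetric positive definite, so is $A^{-1}$, hence $f(y)=\tfrac12 y^{'}A^{-1}y$ is convex with $\nabla f(y)=A^{-1}y$, giving $\nabla f(Ax_i)=x_i$, and $f^{*}(x)=\tfrac12 x^{'}Ax$. These $(\phi,\psi)$ are admissible, and equality holds along the pairs $(Ax_i,x_i)$, so the dual value equals $\langle P,C\rangle$ for $P=\mathrm{diag}(a)$; by weak/strong duality (equivalently, by cyclical monotonicity of the support $\{(Ax_i,x_i)\}$, which sits in the graph of $\partial f$) the plan $P=\mathrm{diag}(a)$ is optimal. This is the main obstacle, and it is exactly where positive definiteness of $A$ is used: without it $f$ need not be convex and the diagonal plan need not be optimal.

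With optimality established, $W_2^2(\alpha_A,\alpha)=\langle P,C\rangle=\sum_{i=1}^n a_i\,|Ax_i-x_i|^2$. To read off convexity in the eigenvalues, diagonalize $A=Q\Lambda Q^{'}$ with $Q$ the (fixed) orthogonal eigenbasis and $\Lambda=\mathrm{diag}(\lambda_1,\dots,\lambda_d)$, and put $z_i=Q^{'}x_i$. Then $Ax_i-x_i=Q(\Lambda-I)z_i$, so $|Ax_i-x_i|^2=\sum_{k=1}^d(\lambda_k-1)^2(z_i)_k^2$, and therefore
\begin{align*}
  W_2^2(\alpha_A,\alpha)=\sum_{k=1}^d c_k\,(\lambda_k-1)^2,\qquad c_k=\sum_{i=1}^n a_i\,(z_i)_k^2\ge 0 .
\end{align*}
This is a sum of nonnegative multiples of the convex maps $\lambda_k\mapsto(\lambda_k-1)^2$, hence convex in $(\lambda_1,\dots,\lambda_d)$, which is the assertion of the theorem; specializing to $\Lambda=\lambda I$ recovers convexity with respect to a single dilation factor. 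I would finish by noting that repeated points among the $x_i$, or vanishing coordinates $(z_i)_k$, affect only the nonnegative constants $c_k$ and so do not disturb the argument.
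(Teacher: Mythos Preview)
Your argument is correct. The paper does not actually supply a proof of this theorem; it merely states the result and defers to the references \cite{yang2018optimal} and \cite{li2021novel}. So there is no ``paper proof'' to compare against in detail.

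That said, your route is essentially the classical one those references rely on: you are invoking (the finite-dimensional, discrete version of) Brenier's characterization of optimal maps for the quadratic cost, namely that a coupling supported on the graph of the gradient of a convex function is optimal. You make this explicit via the dual potentials $\phi_i=|Ax_i|^2-2f(Ax_i)$, $\psi_j=|x_j|^2-2f^{*}(x_j)$ with $f(y)=\tfrac12 y^{'}A^{-1}y$, which is exactly the convex Brenier potential whose gradient is the map $y\mapsto A^{-1}y$ (equivalently $x\mapsto Ax$ in the other direction). The Fenchel--Young inequality then certifies feasibility, with equality on the diagonal, so $P=\mathrm{diag}(a)$ is optimal and $W_2^2(\alpha_A,\alpha)=\sum_i a_i|Ax_i-x_i|^2$. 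The subsequent diagonalization $A=Q\Lambda Q^{'}$ with $Q$ held fixed yields the separable quadratic $\sum_k c_k(\lambda_k-1)^2$, from which convexity in $(\lambda_1,\dots,\lambda_d)$ is immediate. Two small remarks: first, your argument shows $\mathrm{diag}(a)$ is \emph{an} optimal plan, not necessarily the unique one (uniqueness can fail, e.g.\ when some $x_i$ coincide), but that is all the theorem needs; second, it is worth emphasizing, as you implicitly do, that the eigenbasis $Q$ is treated as fixed so that the coefficients $c_k$ do not depend on $\lambda$.
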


The computational cost of solving the above discrete Kantorovich problem through network simplex or interior point methods for two $n$-dimensional vectors is at least $O(n^3\log(n))$ \cite{cuturi2013sinkhorn, pele2009fast}, that prevents the widespread use of the Wasserstein distance for large-scale problem.
Other strategies has been came up with, such as through a dynamic form formulation \cite{benamou2000computational}, through the connection between OT problem and Monge-Amp\`ere equation \cite{loeper2005numerical, froese2011convergent}.
Besides the above methods, the entropy regularization method is the most popular numerical method for the optimal transport problem.
We provide a short review of using the Sinkhorn algorithm to solve the entropy regularized optimal transport problem based on the work \cite{cuturi2013sinkhorn, cuturi2014fast}.

Given discrete probability measures $\alpha$ and $\beta$ defined by equation \eqref{eq:two_discrete_measures}, consider the entropy regularized Kantorovich problem:
\begin{align}\label{eq:regularized_KP}
  P_\varepsilon = \arg \min_{P \in \Pi_{d}(\alpha, \beta)} \left< P, C \right> - \varepsilon E(P),
\end{align}
where $\varepsilon > 0$ is the regularization parameter, $\Pi_{d}(\alpha, \beta)$ is the set of optimal transport plan defined by \eqref{eq:discrete_Kantorovich_feasible_set}.
It is straightforward to see that
\begin{equation}
  \begin{aligned}
    P_\varepsilon = \arg \min_{P\in\Pi_{d}(\alpha,\beta)} KL(P|K),
  \end{aligned}
\end{equation}
where $K_{i,j} = e^{-C_{i,j}/\varepsilon}$.
With the above $P_\varepsilon$, the square of the regularized 2-Wasserstein distance is defined as:
\begin{align}
  W_{2,\varepsilon}^2 (\alpha, \beta) &= \left<P_\varepsilon, C \right>.
\end{align}
This is also denoted as the Sinkhorn distance in the work \cite{cuturi2013sinkhorn}.

Consider the Lagrangian of the problem \eqref{eq:regularized_KP},
\begin{align}
  \mathcal{L} (P_\varepsilon, \phi, \psi) = \sum_{i,j} (P_\varepsilon)_{i,j} C_{i,j} + \varepsilon (P_\varepsilon)_{i,j} \left(\log (P_\varepsilon)_{i,j} - 1\right) + \phi' \left(a - P_\varepsilon \bm{1}_m \right) + \psi' \left(b - P_\varepsilon^{'} \bm{1}_n \right).
\end{align}
The first order optimality condition provides that
\begin{align}
  \frac{\partial \mathcal{L} (P_\varepsilon, \phi, \psi)}{\partial (P_\varepsilon)_{i,j}} = C_{i,j} + \varepsilon \log (P_\varepsilon)_{i,j} - \phi_i - \psi_j = 0,
\end{align}
which equivalents to
\begin{align}
  P_\varepsilon = \text{diag}(u) K \text{diag} (v),
\end{align}
where $u_i = e^{\phi_i / \varepsilon}$, $v_j = e^{\psi_i / \varepsilon}$.
Then, the dual problem can be derived with the Lagrangian as:
\begin{align} \label{eq:regularized_KP_dual}
  \max_{\phi, \psi \in \mathbb{R}^{n} \times \mathbb{R}^m} \phi^{'} a + \psi^{'} b - \varepsilon\sum_{i,j} e^{-(C_{i,j} - \phi_i - \psi_j)/\varepsilon}.
\end{align}


The above dual problem can be solved with a matrix scaling algorithm named Sinkhorn algorithm.
For more discussion please refer to \cite{cuturi2013sinkhorn,cuturi2014fast}.
The Sinkhorn algorithm is given by Algorithm \ref{algorithm:sinkhorn}.
\begin{algorithm}[h] \label{algorithm:sinkhorn}
  \SetAlgoLined
  Input: $\alpha$, $\beta$, $C$, $\varepsilon$. \\
  Initialization: matrix $K$ with $K_{i,j} = e^{-C_{i,j}/\varepsilon}$, $u = \bm{1}_n$, $v = \bm{1}_m$.\\
  \While{not converged}{
    Update vector $u$ with $u_i = a_i / (Kv)_i$.\\
    Update vector $v$ with $v_j = b_j / (K^{'}u)_j$.
  }
  Compute the transport plan matrix $P_\varepsilon = \text{diag}(u) K \text{diag} (v)$. \\
  \Return{
    The Sinkhorn distance $W_{2,\varepsilon}^2 (\alpha,\beta)  = \left< P_\varepsilon, C\right>$. \\
    The gradient of $W_{2,\varepsilon}^2 (\alpha,\beta)$ with respect to $a$ is $(\nabla_a W_{2,\varepsilon}^2 (\alpha,\beta) )_i = \varepsilon \log(u_i) - \varepsilon/n \sum_{j} \log(u_j)$.  
  }
  \caption{Sinkhorn algorithm.}
\end{algorithm}
The stopping criteria can be designed in a variety of ways, for example, the iteration process can be terminated after a certain number of iterations.
Another strategy is: denote the discrete transport plan at the $k$-th iteration as $P_\varepsilon^k = \text{diag}(u^k) K \text{diag} (v^k)$, the distance of the $k$-th iteration as $d_1 = \left< P_\varepsilon^k, C\right>$, and the distance of the previous iteration as $d_0 = \left< P_\varepsilon^{k-1}, C\right>$.
The iteration can be terminated when $|d_1/d_0 - 1| \leq \eta$, here $\eta$ is a threshold variable to control the accuracy.

\subsection{Unbalanced optimal transport distance}
In this subsection, we introduce the UOT distance mainly based on the work in \cite{chizat2018scaling}.

Define two positive discrete measures:
\begin{align}\label{eq:two_discrete_measure1}
  \alpha = \sum_{i=1}^n a_i \delta_{x_i}, \quad \beta = \sum_{i=1}^m b_i \delta_{y_i},
\end{align}
where $a = (a_1, \cdots, a_n) \in \mathbb{R}_+^{n}$ and $b = (b_1, \cdots, b_m) \in \mathbb{R}_+^{m}$.

The unbalanced optimal transport problem is a generalization of the optimal transport problem.
To relax the marginal constraints in the discrete Kantorovich problem, the unbalanced optimal transport problem is defined as:
\begin{align}
    \min_{P\in\mathbb{R}^{n\times m}} \left<P,C \right> + F_a(P\bm{1}_m) + F_b(P^{'} \bm{1}_n), \label{eq:UOT_problem}
\end{align}
here both $F_a$ and $F_b$ are proper convex functions.
For example, when $F_a$ and $F_b$ are the indicator function:
\begin{align}
  F_a(P\bm{1}_m) = \iota_{\{=\}}(P\bm{1}_m|a), \quad F_b(P^{'}\bm{1}_n) = \iota_{\{=\}}(P^{'}\bm{1}_n|b),
\end{align}
it can be easily checked that the unbalanced optimal transport problem \eqref{eq:UOT_problem} coincides with the discrete Kantorovich problem \eqref{eq:discrete_Kantorovich} when $\sum_i a_i = \sum_i b_i = 1$.
In this work we consider the case when
\begin{align}
  F_a(P\bm{1}_m) = \varepsilon_u KL(P\bm{1}_m|a), \quad F_b(P^{'}\bm{1}_n) = \varepsilon_u KL(P^{'}\bm{1}_n|b).
\end{align}
Here $F_a$ and $F_b$ are the Kullback-Leibler divergence between vectors given in equation \eqref{eq:KL_divergence} which measures the differences between $P\bm{1}_m$ and $a$, $P^{'}\bm{1}_n$ and $b$.
And the parameter $\varepsilon_u$ controls the weight of the mass balancing term in \eqref{eq:UOT_problem}.

Similar to the Wasserstein distance \eqref{def:p_Wasserstein_distance}, the unbalanced optimal transport distance with the square Euclidean ground cost between vector is:
\begin{align} \label{UOT_distance}
  W_{2,\varepsilon_u}^2(\alpha,\beta) = \min_{P\in\mathbb{R}^{n\times m}} \left<P,C \right> + \varepsilon_u KL(P\bm{1}_m|a) + \varepsilon_u KL(P^{'}\bm{1}_n|b),
\end{align}
where the cost matrix $C$ is defined as $C_{i,j} = |x_i-y_j|^2$.
When the set $X$ and $Y$ are fixed, the above UOT distance is with respect to vector $a$ and $b$ that can be written as $W_{2,\varepsilon_u}^2(a,b)$.

As we discussed in the previous subsection, the entropy regularization method is a proper choice to evaluate the Wasserstein distance and the gradient.
Given the regularization parameter $\varepsilon > 0$, consider the entropy regularized UOT problem:
\begin{align}
  \min_{P \in\mathbb{R}^{n\times m}} \left<P,C \right> -\varepsilon E(P) + \varepsilon_u KL(P\bm{1}_m|a) + \varepsilon_u KL(P^{'}\bm{1}_n|b). \label{eq:Regularized_UOT}
\end{align}
Same as we discussed in the previous subsection, the above equation can be rewritten as
\begin{align}
  \min_{P\in\mathbb{R}^{n\times m}} \varepsilon KL(P|K) + \varepsilon_u KL(P\bm{1}_m|a) + \varepsilon_u KL(P^{'}\bm{1}_n|b),
\end{align}
where $K$ is defined as $K_{i,j} = e^{-C_{i,j}/ \varepsilon}$.

\begin{definition} \label{def:UOT}
  Given positive discrete measures $\alpha, \beta$ with equation \eqref{eq:two_discrete_measure1}.
  Define the ground cost matrix $C$ by $C_{i,j} = |x_i-y_j|^2$.
  With the regularization parameter $\varepsilon$ and the mass balancing parameter $\varepsilon_u$, the regularized unbalanced optimal transport distance between $\alpha$ and $\beta$ is
  \begin{equation}
    \begin{aligned}
      W_{2,\varepsilon_u, \varepsilon}^2 (\alpha,\beta) =& \left<P_{\varepsilon}, C \right> + \varepsilon_u KL(P_{\varepsilon}\bm{1}_m|a) + \varepsilon_u KL(P^{'}_{\varepsilon}\bm{1}_n|b), \\
      \text{where } P_{\varepsilon } =& \arg\min_{P\in\mathbb{R}^{n\times m}} \varepsilon KL(P|K) + \varepsilon_u KL(P\bm{1}_m|a) + \varepsilon_u KL(P^{'}\bm{1}_n|b). \label{eq:Primal_UOT}  
    \end{aligned}
  \end{equation}
  Here $KL(\cdot|\cdot)$ is the Kullback-Leibler divergence between two matrices or vectors.
  And $K_{i,j} = e^{-C_{i,j}/ \varepsilon}$.
\end{definition}

Equation \eqref{eq:Primal_UOT} is denoted as the primal problem. 
The dual problem is needed to compute the unbalanced optimal transport distance.
\begin{theorem} [\cite{chizat2018scaling}, Theorem 3.2] \label{thm:Dual_thm}
  The dual problem of \eqref{eq:Primal_UOT} is
  \begin{align}
    \max_{\phi,\psi\in\mathbb{R}^n_+} \sum_{i,j} -\varepsilon_u a_i \left(e^{-\phi_i/\varepsilon_u}-1\right) - \varepsilon_u b_j \left(e^{-\psi_j/\varepsilon_u}-1\right)
    - \varepsilon K_{i,j} \left(e^{\phi_i/\varepsilon} e^{\psi_j/\varepsilon}-1\right), \label{eq:Dual_UOT}
  \end{align}
  where the matrix $K$ is defined by $K_{i,j} = e^{-C_{i,j}/\varepsilon}$.
  Strong duality holds for the primal and the dual problem.
  The minimization is attained for a unique $P_{\varepsilon }^*$ for the primal problem \eqref{eq:Primal_UOT}.
  And $\phi^*$, $\psi^*$ maximize the dual problem \eqref{eq:Dual_UOT} if and only if:
  \begin{align}
    (P_{\varepsilon }^*)_{i,j} = e^{\phi_i^*/\varepsilon} K_{i,j} e^{\psi_j^*/\varepsilon}.
  \end{align}
\end{theorem}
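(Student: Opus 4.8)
The plan is to prove this by convex (Fenchel--Rockafellar) duality, following the same Lagrangian computation used earlier in this section for the balanced Sinkhorn problem. First I would rewrite the primal \eqref{eq:Primal_UOT} by introducing auxiliary variables $u=P\bm{1}_m$ and $v=P^{'}\bm{1}_n$, turning it into the convex program
\begin{equation}
  \min_{P,u,v}\ \varepsilon KL(P|K)+\varepsilon_u KL(u|a)+\varepsilon_u KL(v|b)\quad\text{s.t.}\quad u=P\bm{1}_m,\ v=P^{'}\bm{1}_n,
\end{equation}
and introduce multipliers $\phi\in\mathbb{R}^n,\ \psi\in\mathbb{R}^m$ for the two linear equality constraints, giving a Lagrangian $\mathcal{L}(P,u,v,\phi,\psi)$ exactly as in the Sinkhorn case. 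Each of the three primal terms is separable and strictly convex on its effective domain (the nonnegative orthant, with $0\log 0=0$) and superlinear there, so for fixed $(\phi,\psi)$ the inner minimization $\min_{P,u,v}\mathcal{L}$ is attained; since $K_{i,j}>0$ and (generically) the entries of $a,b$ are positive, the minimizer lies in the open orthant, so it is found by setting the partial derivatives to zero.

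Second, I would carry out that inner minimization explicitly. The condition $\partial_{P_{i,j}}\mathcal{L}=0$ gives $\varepsilon\log(P_{i,j}/K_{i,j})=\phi_i+\psi_j$, i.e. $P_{i,j}=e^{\phi_i/\varepsilon}K_{i,j}e^{\psi_j/\varepsilon}$; the conditions $\partial_{u_i}\mathcal{L}=0$ and $\partial_{v_j}\mathcal{L}=0$ give $u_i=a_ie^{-\phi_i/\varepsilon_u}$ and $v_j=b_je^{-\psi_j/\varepsilon_u}$. Substituting these closed forms back into $\mathcal{L}$, the bilinear coupling terms exactly cancel the $P\log P$ and $u\log u,\ v\log v$ contributions, leaving precisely the concave functional in \eqref{eq:Dual_UOT}, whose maximization over $(\phi,\psi)$ is the claimed dual. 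Equivalently, the Legendre conjugate of $\varepsilon KL(\cdot|K)$ is $Z\mapsto\varepsilon\sum_{i,j}K_{i,j}(e^{Z_{i,j}/\varepsilon}-1)$ and that of $\varepsilon_u KL(\cdot|a)$ is $\phi\mapsto\varepsilon_u\sum_i a_i(e^{\phi_i/\varepsilon_u}-1)$, and composing the former with the adjoint $\Lambda^{*}(\phi,\psi)=(\phi_i+\psi_j)_{i,j}$ of the marginal map $P\mapsto(P\bm{1}_m,P^{'}\bm{1}_n)$ produces \eqref{eq:Dual_UOT}.

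Third comes the verification of strong duality and the uniqueness/characterization statements. Strong duality holds because a constraint qualification is trivially met: with $K_{i,j}>0$ and positive $a,b$, a strictly positive $P$ (e.g. a small multiple of $K$) is feasible with finite objective, so the relative interiors of the domains meet the affine constraint set — equivalently the primal conjugates are finite everywhere, and the Fenchel--Rockafellar theorem applies with no gap. Uniqueness of $P_{\varepsilon}^{*}$ follows since $P\mapsto\varepsilon KL(P|K)$ is strictly convex on the orthant, the marginal terms are convex, and the objective is coercive. For the ``if and only if'', the KKT conditions are necessary and sufficient for this convex program, and the $P$-stationarity equation from the second step is exactly $(P_{\varepsilon}^{*})_{i,j}=e^{\phi_i^{*}/\varepsilon}K_{i,j}e^{\psi_j^{*}/\varepsilon}$; together with the marginal-stationarity relations $(P_{\varepsilon}^{*}\bm{1}_m)_i=a_ie^{-\phi_i^{*}/\varepsilon_u}$ and $((P_{\varepsilon}^{*})^{'}\bm{1}_n)_j=b_je^{-\psi_j^{*}/\varepsilon_u}$, which pin down $\phi^{*},\psi^{*}$ uniquely, this gives the stated equivalence; strict concavity of the dual functional (each $a_i>0$ makes $\phi_i\mapsto-\varepsilon_u a_i(e^{-\phi_i/\varepsilon_u}-1)$ strictly concave, and likewise in $\psi$) confirms that the dual maximizer is in fact unique.

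The step I expect to require the most care is the constraint-qualification/attainment argument, precisely because $KL(\cdot|\cdot)$ is extended-real-valued with effective domain only the closed nonnegative orthant: one must confirm the inner minimum of $\mathcal{L}$ is genuinely attained in the interior (so termwise differentiation is legitimate) and that no duality gap arises at the boundary — this is exactly where positivity of $K$ and of $a,b$ is used. I would also note that the ``$\phi,\psi\in\mathbb{R}^n_{+}$'' in \eqref{eq:Dual_UOT} is really $\mathbb{R}^n\times\mathbb{R}^m$ (the KL conjugate is finite on all of $\mathbb{R}$), and that, read strictly, the plan-formula alone determines $(\phi,\psi)$ only up to the gauge shift $\phi\mapsto\phi+c\bm{1},\ \psi\mapsto\psi-c\bm{1}$, so the equivalence is to be understood together with the marginal relations above (equivalently, as selecting the unique dual maximizer).
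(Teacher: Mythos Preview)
The paper does not give its own proof of this theorem: it is simply quoted from \cite{chizat2018scaling}, Theorem~3.2, and used as a black box. Your Fenchel--Rockafellar derivation is correct and is in fact the approach taken in that reference, so in that sense your proposal coincides with ``the'' proof; the auxiliary-variable/Lagrangian computation, the explicit conjugates, and the strict-convexity argument for uniqueness are all standard and sound.

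Two of your closing remarks are worth keeping. First, the domain $\mathbb{R}^n_+$ in \eqref{eq:Dual_UOT} is indeed a typo for $\mathbb{R}^n\times\mathbb{R}^m$: the KL conjugates are finite on all of $\mathbb{R}$, and restricting to the nonnegative orthant would be wrong. Second, your caution about the ``if'' direction is well placed: the scaling relation $(P_\varepsilon^*)_{i,j}=e^{\phi_i/\varepsilon}K_{i,j}e^{\psi_j/\varepsilon}$ alone is invariant under $(\phi,\psi)\mapsto(\phi+c\bm{1},\psi-c\bm{1})$, whereas the dual objective is not (the $e^{-\phi_i/\varepsilon_u}$ and $e^{-\psi_j/\varepsilon_u}$ terms break the gauge), so the equivalence as literally stated needs the marginal stationarity relations you wrote down, or equivalently the strict concavity of the dual, to single out the unique maximizer. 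That is a defect of the statement as copied here rather than of your argument.
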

\begin{proposition} \label{scaling_prop}
  Given matrix $K$, coefficient $\varepsilon$ and $\varepsilon_u$ in consistent with Theorem \ref{thm:Dual_thm}.
  Suppose $\phi^*$, $\psi^*$ solve the dual problem \eqref{eq:Dual_UOT}, let $(u^*, v^*) \in\mathbb{R}^n_+ \times \mathbb{R}_+^m$ with $u_i^* = e^{\phi_i^*/\varepsilon}$ and $v_j^* = e^{\psi_j^*/\varepsilon}$.
  Then,
  \begin{align}
    u^{*}_i = \left(\frac{a_i}{\sum_j K_{i,j}v_j^{*}}\right)^{\frac{\varepsilon_u}{\varepsilon_u+\varepsilon}},\quad
  v^{*}_j = \left(\frac{b_j}{\sum_i K_{i,j}u_i^{*}}\right)^{\frac{\varepsilon_u}{\varepsilon_u+\varepsilon}}.
  \end{align}
\end{proposition}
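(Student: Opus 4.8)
The plan is to obtain the two fixed-point identities directly from the first-order optimality conditions of the dual problem \eqref{eq:Dual_UOT}. Denote by $D(\phi,\psi)$ the dual objective. Every term of $D$ is concave in $(\phi,\psi)$: the maps $t\mapsto -e^{-t/\varepsilon_u}$ are concave, and each coupling contribution $-\varepsilon K_{i,j}e^{(\phi_i+\psi_j)/\varepsilon}$ is concave because $(\phi_i,\psi_j)\mapsto e^{(\phi_i+\psi_j)/\varepsilon}$ is convex and $\varepsilon K_{i,j}>0$. Hence $D$ is concave, and since Theorem \ref{thm:Dual_thm} guarantees that the maximum is attained, any maximizer $(\phi^*,\psi^*)$ is characterized by $\nabla D(\phi^*,\psi^*)=0$.

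Next I would differentiate $D$ in the variable $\phi_i$. Only two groups of terms depend on $\phi_i$: the $a$-marginal penalty, whose derivative is $a_i e^{-\phi_i/\varepsilon_u}$, and the coupling terms, whose derivative is $-e^{\phi_i/\varepsilon}\sum_j K_{i,j}e^{\psi_j/\varepsilon}$. The stationarity condition $\partial D/\partial\phi_i=0$ is therefore
\begin{align*}
  a_i\,e^{-\phi_i^*/\varepsilon_u}=e^{\phi_i^*/\varepsilon}\sum_j K_{i,j}\,e^{\psi_j^*/\varepsilon}.
\end{align*}
Now substitute $u_i^*=e^{\phi_i^*/\varepsilon}$ and $v_j^*=e^{\psi_j^*/\varepsilon}$. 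Since $\phi_i^*=\varepsilon\log u_i^*$ we have $e^{-\phi_i^*/\varepsilon_u}=(u_i^*)^{-\varepsilon/\varepsilon_u}$, so the condition reads $a_i(u_i^*)^{-\varepsilon/\varepsilon_u}=u_i^*\sum_j K_{i,j}v_j^*$, equivalently
\begin{align*}
  (u_i^*)^{\frac{\varepsilon_u+\varepsilon}{\varepsilon_u}}\sum_j K_{i,j}v_j^*=a_i.
\end{align*}
Because $a_i>0$ and the entries of $K$ and of $v^*$ are strictly positive, I may divide by $\sum_j K_{i,j}v_j^*$ and raise both sides to the power $\varepsilon_u/(\varepsilon_u+\varepsilon)$, which gives $u_i^*=\bigl(a_i/\sum_j K_{i,j}v_j^*\bigr)^{\varepsilon_u/(\varepsilon_u+\varepsilon)}$. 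This is the first identity; the formula for $v_j^*$ is obtained in exactly the same way from $\partial D/\partial\psi_j=0$, using that $D$ is symmetric under the simultaneous exchange $(\phi,a)\leftrightarrow(\psi,b)$ and $K\mapsto K^{'}$.

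I do not expect a genuine obstacle here: the argument is a one-line differentiation followed by an algebraic rearrangement, so the ``hard part'' is really just keeping the exponents straight when eliminating $\phi_i^*$ in favor of $u_i^*$. The only point deserving a remark is the reduction of ``$(\phi^*,\psi^*)$ maximizes $D$'' to ``$\nabla D(\phi^*,\psi^*)=0$'', which follows from concavity of $D$ together with the existence of a maximizer from Theorem \ref{thm:Dual_thm} (read over the natural domain $\mathbb{R}^n\times\mathbb{R}^m$ of the entropic unbalanced dual; one may also verify directly that $D$ is coercive in each coordinate, the coupling term forcing $D\to-\infty$ as $\phi_i\to+\infty$ and the penalty term, via $a_i>0$, forcing $D\to-\infty$ as $\phi_i\to-\infty$). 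As a consistency check, inserting the resulting $u^*,v^*$ into the primal--dual identity $(P_\varepsilon^*)_{i,j}=e^{\phi_i^*/\varepsilon}K_{i,j}e^{\psi_j^*/\varepsilon}$ of Theorem \ref{thm:Dual_thm} reproduces the first-order optimality conditions of the primal problem \eqref{eq:Primal_UOT}, confirming that the primal and dual characterizations of the optimum agree.
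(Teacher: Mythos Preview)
Your proposal is correct and follows exactly the route the paper indicates: the paper simply states that the proposition ``can be easily checked by computing the first order optimality condition of the dual problem \eqref{eq:Dual_UOT},'' and your argument carries out precisely that differentiation and the ensuing algebraic rearrangement in $u_i^*$. There is nothing to add.
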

The above proposition can be easily checked by computing the first order optimality condition of the dual problem \eqref{eq:Dual_UOT}.
The following remark provides an algorithm to compute the UOT distance with the entropy regularization as Definition \ref{def:UOT}.
\begin{remark}
  Starting with an initial value $v^{(0)} = \bm{1}_m$, the dual problem can be computed through a coordinate ascent algorithm:
  for the $k$-th iteration,
  \begin{align}
    u^{(k+1)}_i = \left(\frac{a_i}{\sum_j K_{i,j}v_j^{(k)}}\right)^{\frac{\varepsilon_u}{\varepsilon_u+\varepsilon}},\quad
  v^{(k+1)}_j = \left(\frac{b_j}{\sum_i K_{i,j}u_i^{(k+1)}}\right)^{\frac{\varepsilon_u}{\varepsilon_u+\varepsilon}}.
  \end{align}
  Suppose the coordinate ascent algorithm converges with $u^*, v^*$, the transport plan matrix $P_{\varepsilon }^*$ in \eqref{eq:Primal_UOT} can be computed as
  \begin{align}
    (P_{\varepsilon }^*)_{i,j} = u^*_i K_{i,j} v^*_j.
  \end{align}
\end{remark}
Also, the gradient of regularized UOT distance can be achieved through the following remark.
\begin{remark} \label{prop_adj_source}
  Suppose $P^*$, $\phi^*$ and $\psi^*$ solve the primal problem \eqref{eq:Primal_UOT} and dual problem \eqref{eq:Dual_UOT}, the gradient of regularized unbalanced optimal transport distance with respect to $a$ is:
  \begin{align}
    \nabla_{a} W_{2,\varepsilon_u, \varepsilon}^2 (\alpha,\beta) = -\varepsilon_u \left(e^{-\phi^*/\varepsilon_u} - 1\right).
  \end{align}
\end{remark}

The algorithm to compute the regularized UOT distance and the gradient is given by Algorithm \ref{algorithm:UOT}.
Similar stopping criteria of Algorithm \ref{algorithm:sinkhorn} can be designed.
For more information about the above remarks and Algorithm \ref{algorithm:UOT}, please refer to the work \cite{chizat2018scaling}.
\begin{algorithm}[h] \label{algorithm:UOT}
\SetAlgoLined
  Input: $C$, $\varepsilon_u$, $\varepsilon$\\
  Initialization: $K_{i,j}=e^{-C_{i,j}/\varepsilon}$, $v = \bm{1}_m$, \\
  \While{not converged}{
    Update vector $u$ with $u_i = a_i / (Kv)_i^{\varepsilon_u / (\varepsilon_u+\varepsilon)} $.\\
    Update vector $v$ with $v_j = b_j / (K^{'}u)_j^{\varepsilon_u / (\varepsilon_u+\varepsilon)}$.
  }
  Compute transport matrix $P_{\varepsilon }$ with $(P_{\varepsilon })_{i,j} = u_i K_{i,j} v_j$ \\
  \Return{
    The regularized UOT distance: 
    \begin{align*}
      W_{2,\varepsilon_u, \varepsilon}^2 (\alpha,\beta) ={}& \sum_{i,j}  (P_{\varepsilon })_{i,j}C_{i,j} + \varepsilon_u \left((P_{\varepsilon } \bm{1}_m)_i\log\left(\frac{( P_{\varepsilon } \bm{1}_m)_i}{a_i}\right)-(P_{\varepsilon } \bm{1}_m)_i+a_i\right)  \\
      &+ \varepsilon_u \left((P_{\varepsilon } ^{'}\bm{1}_n)_j\log\left(\frac{(P_{\varepsilon } ^{'}\bm{1}_n)_j}{b_j}\right)-(P_{\varepsilon } ^{'}\bm{1}_n)_j+b_j\right).
    \end{align*}\\
    The gradient of $W_{2,\varepsilon_u,\varepsilon}^2 (\alpha,\beta)$ with respect to $a$ is $(\nabla_a W_{2,\varepsilon_u,\varepsilon}^2 (\alpha,\beta) )_i = -\varepsilon_u \left(u_i^{-\varepsilon/\varepsilon_u}-1\right)$.
  }
  \caption{Scaling algorithm for regularized UOT distance and gradient}
\end{algorithm}

\subsection{Mixed L1/Wasserstein distance}


A mixed L1/Wasserstein distance is constructed in this subsection.
The concept of the mixed distance with Wasserstein distance is not new.
In the work of \cite{benamou2001mixed}, a mixed L2/Wasserstein is provided through the dynamic form of optimal transport problem.

Start with the positive discrete measures defined as equation \eqref{eq:two_discrete_measure1}.
Denote the normalized $\alpha$ and $\beta$ as
\begin{align}\label{eq:normalized_measures}
  \hat \alpha = \frac{1}{\|a\|_1} \sum_{i=1}^n a_i \delta_{x_i} = \sum_{i=1}^n \hat a_i \delta_{x_i}, \quad \hat \beta = \frac{1}{\|b\|_1} \sum_{i=1}^m b_i \delta_{y_i} = \sum_{i=1}^m \hat b_i \delta_{y_i}.
\end{align}
It is straight forward to see that $\hat a = (\hat a_1, \cdots, \hat a_n) \in \Sigma_n$, $\hat b = (\hat b_1, \cdots, \hat b_m)\in \Sigma_m$.
The $2$-Wasserstein distance between $\hat\alpha$ and $\hat\beta$ is well defined.

The following definition provides the mixed L1/Wasserstein distance.
\begin{definition}[Mixed L1/Wasserstein distance]
  Given two discrete measures $\alpha$ and $\beta$ as equation \eqref{eq:two_discrete_measure1}, the cost matrix $C$ is defined by $C_{i,j} = |x_i - y_j|^2$.
  The mixed L1/Wasserstein distance between $\alpha$ and $\beta$ is
  \begin{align}\label{def:mixed_distance}
    \bar W_2(\alpha, \beta) = W_2(\hat \alpha, \hat \beta) + \left| \|a\|_1 - \|b\|_1 \right|,
  \end{align}
  where $\hat \alpha$ and $\hat \beta$ are the normalized measures given by equation \eqref{eq:normalized_measures}.
\end{definition}
In the above definition, the first term is the $2$-Wasserstein distance between normalized measures $\hat \alpha$ and $\hat \beta$, which describes the shape difference between $\alpha$ and $\beta$.
The second term is the L1 term, which is the absolute value of the mass difference between $\alpha$ and $\beta$.
When the set $X$ and $Y$ are fixed, the mixed L1/Wasserstein distance is with respect to vector $a$ and $b$ that can be written as $\bar W_2(a,b)$.
The following proposition describes the metric property of the above mixed distance.
\begin{proposition}
  The mixed L1/Wasserstein distance defined in equation \eqref{def:mixed_distance} is a distance over the set of positive discrete measures.
\end{proposition}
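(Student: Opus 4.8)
The plan is to verify the three metric axioms for $\bar W_2$ on the set of positive discrete measures, exploiting the fact that $\bar W_2$ is a sum of two pieces: $W_2(\hat\alpha,\hat\beta)$, which is already known to be a metric on probability measures, and $|\,\|a\|_1-\|b\|_1\,|$, which is the pullback of the standard metric on $\mathbb{R}_+$ under the total-mass map $\alpha\mapsto\|a\|_1$. Nonnegativity is immediate since both summands are nonnegative, and symmetry is immediate since both summands are symmetric in $(\alpha,\beta)$. The triangle inequality is also essentially free: for three positive measures $\alpha,\beta,\gamma$ we have $W_2(\hat\alpha,\hat\gamma)\le W_2(\hat\alpha,\hat\beta)+W_2(\hat\beta,\hat\gamma)$ from the triangle inequality for the genuine $2$-Wasserstein metric, and $|\,\|a\|_1-\|c\|_1\,|\le |\,\|a\|_1-\|b\|_1\,|+|\,\|b\|_1-\|c\|_1\,|$ from the triangle inequality on $\mathbb{R}$; adding these two inequalities gives $\bar W_2(\alpha,\gamma)\le\bar W_2(\alpha,\beta)+\bar W_2(\beta,\gamma)$.

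The only axiom requiring genuine thought is the identity of indiscernibles: $\bar W_2(\alpha,\beta)=0$ if and only if $\alpha=\beta$. The ``if'' direction is trivial. For ``only if'', suppose $\bar W_2(\alpha,\beta)=0$. Since both terms are nonnegative, each must vanish separately: $W_2(\hat\alpha,\hat\beta)=0$ and $\|a\|_1=\|b\|_1$. The first equality, together with the fact that $W_2$ is a metric on probability measures, forces $\hat\alpha=\hat\beta$, i.e. the normalized measures coincide. Combining $\hat\alpha=\hat\beta$ with $\|a\|_1=\|b\|_1$ and the relations $\alpha=\|a\|_1\,\hat\alpha$, $\beta=\|b\|_1\,\hat\beta$ yields $\alpha=\beta$. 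One subtlety to address is the degenerate case where one of the masses is zero: if $\|a\|_1=0$ then $\alpha$ is the zero measure, and $\bar W_2(\alpha,\beta)=0$ forces $\|b\|_1=0$ as well, hence $\beta$ is also the zero measure and $\alpha=\beta$; when both masses are positive the normalization is well-defined and the argument above applies verbatim. It may also be worth noting that $\bar W_2$ takes finite values (the supports lie in $\mathbb{R}^d$, so $W_2$ is finite, and the mass term is finite), so $\bar W_2$ is a genuine real-valued metric rather than an extended one.

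The main obstacle, such as it is, is bookkeeping rather than mathematics: one must be careful that the comparison of two measures $\alpha=\sum a_i\delta_{x_i}$ and $\beta=\sum b_j\delta_{y_j}$ is done at the level of measures (not coefficient vectors), since the sampling sets $X$ and $Y$ may differ; the cleanest route is to work with the measures $\hat\alpha,\hat\beta$ directly and invoke the known metric structure of $W_2$ on probability measures over $\mathbb{R}^d$ with the quadratic cost, rather than re-deriving anything in coordinates. With that understood, the proof is a short three-part verification, with the identity-of-indiscernibles step being the only one that uses a nontrivial input (the separation property of $W_2$ and the bilinear decomposition $\alpha=\|a\|_1\hat\alpha$).
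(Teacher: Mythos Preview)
Your proposal is correct and follows essentially the same approach as the paper: verify nonnegativity, symmetry, identity of indiscernibles, and the triangle inequality by splitting $\bar W_2$ into its $W_2$ part and its $|\,\|a\|_1-\|b\|_1\,|$ part and invoking the known metric properties of each. Your treatment is in fact slightly more careful than the paper's, since you explicitly handle the degenerate zero-mass case and remark on finiteness, neither of which the paper addresses.
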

\begin{proof}
  It is clear that $\bar W_2(\alpha, \beta)$ is nonnegative.
  Suppose $\alpha = \beta$, then $\|a\|_1 = \|b\|_1$, and $\left| \|a\|_1 - \|b\|_1 \right| = 0$.
  Also $W_2(\hat \alpha, \hat \beta) = 0$ by the metric property of $W_2$, we have $\bar W_2(\alpha, \beta) = 0$.
  On the other hand, if $\bar W_2(\alpha, \beta) = 0$ then $W_2(\alpha, \beta) = 0$ and $\left| \|a\|_1 - \|b\|_1 \right| = 0$, which leads to $\alpha = \beta$.

  The symmetric property of $\bar W_2(\alpha, \beta)$ comes directly form the metric property of $W_2(\alpha, \beta)$.

  Denote $\gamma = \sum_{i = 1}^k c_i \delta_{z_i}$, where $c \in \mathbb{R}_+^{k}$ and the set of sampling point $Z = \{z_1, \cdots, z_k\}$.
  \begin{equation}
    \begin{aligned}
      \bar W_2(\alpha, \gamma)  &= W_2(\hat \alpha, \hat \gamma) + \left| \|a\|_1 - \|c\|_1 \right| \\
      &= W_2(\hat \alpha, \hat \gamma) + \left| \|a\|_1 - \|b\|_1 + \|b\|_1 - \|c\|_1 \right| \\
      &\leq  W_2(\hat \alpha, \hat \beta) + W_2(\hat \beta, \hat \gamma) + \left| \|a\|_1 - \|b\|_1 \right| + \left| \|b\|_1 - \|c\|_1 \right| \\
      &= \bar W_2(\alpha, \beta) + \bar W_2(\beta, \gamma).  
    \end{aligned}
  \end{equation}
  The inequality follows by the metric property of $2$-Wasserstein distance and the triangle inequality.
\end{proof}

For the variational problem when a discrete measure $\beta$ is given, we are going to apply the optimization algorithm to find a measure $\alpha$ which is close to $\beta$.
Since
\begin{equation}
  \begin{aligned}
    \bar W_2^2(\alpha, \beta) = \left( W_2(\hat \alpha, \hat \beta) + \left| \|a\|_1 - \|b\|_1 \right| \right)^2 \leq 2\left( W_2^2(\hat \alpha, \hat \beta) + \left( \|a\|_1 - \|b\|_1 \right)^2 \right),
  \end{aligned}
\end{equation}
define
\begin{align}
  J(\alpha; \beta) = W_2^2(\hat \alpha, \hat \beta) + \left( \|a\|_1 - \|b\|_1 \right)^2 . \label{eq:variational_fn_mixed}
\end{align}
The above inequality follows by the triangle inequality.
In this case, to minimize the mixed L1/Wasserstein distance between $\alpha$ and $\beta$, it is sufficient to minimize the objective function $J(\alpha; \beta)$ with respect to $\alpha$.

By the construction of the discrete measure $\alpha$, the objective function is controlled by vector $a$ when the sampling set $X$ is fixed, i.e. 
\begin{align}
  J(\alpha; \beta) = J(a). \label{eq:variational_fn_mixed_aX}
\end{align}
The following proposition provides the subdifferentiability of the objective function with respect to $a$.
\begin{proposition}
  The objective function $J(a)$ defined by equation \eqref{eq:variational_fn_mixed} and \eqref{eq:variational_fn_mixed_aX} is subdifferentiable with respect to $a$.
\end{proposition}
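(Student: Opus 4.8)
The plan is to split $J(a)=W_2^2(\hat\alpha,\hat\beta)+(\|a\|_1-\|b\|_1)^2$ as an outer convex (indeed polyhedral) function composed with a smooth normalization map, plus a smooth term, and then invoke the chain rule and the sum rule for subdifferentials. Throughout we work on the cone $\mathcal{D}=\{a\in\mathbb{R}_+^n:\|a\|_1>0\}$, the natural domain of $J$, since $\hat a=a/\|a\|_1$ needs $\|a\|_1>0$. Note that $J$ need not be convex in $a$, because the normalization is nonlinear, so subdifferentiability is understood in the generalized (Clarke) sense; the argument will in fact show $J$ is locally Lipschitz on $\mathcal{D}$ and will exhibit an explicit element of $\partial J(a)$.

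First I would treat the three ingredients separately. \emph{(i)} On $\mathcal{D}$ we have $\|a\|_1=\mathbf{1}_n^{'}a$, so the map $N:a\mapsto\hat a$ is $C^\infty$ with Jacobian $(DN(a))_{ij}=\|a\|_1^{-1}(\delta_{ij}-\hat a_i)$; its range lies in the tangent space $\{h:\mathbf{1}_n^{'}h=0\}$ of $\Sigma_n$ and its transpose satisfies $DN(a)^{'}\mathbf{1}_n=0$. \emph{(ii)} The map $\hat a\mapsto W_2^2(\hat\alpha,\hat\beta)$ is convex and polyhedral on $\Sigma_n$: it is the optimal value of the linear program \eqref{eq:discrete_Kantorovich}, whose feasible set $\Pi_d(\alpha,\beta)$ has right-hand side affine in $\hat a$; convexity follows from averaging feasible plans, and as a parametric-LP value function it is piecewise linear, hence subdifferentiable at every point of $\Sigma_n$, with a subgradient given by any optimal dual vector $\phi$ of \eqref{eq:discrete_dual_problem} (Proposition \ref{prop:subdifferentiability}). \emph{(iii)} The term $a\mapsto(\|a\|_1-\|b\|_1)^2=(\mathbf{1}_n^{'}a-\|b\|_1)^2$ is $C^\infty$ on $\mathcal{D}$ with gradient $2(\|a\|_1-\|b\|_1)\mathbf{1}_n$.

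Then I would combine them. Because $W_2^2(\hat\alpha,\hat\beta)$ is convex in $\hat a$ and $N$ is $C^1$, the composition $a\mapsto W_2^2(\hat\alpha,\hat\beta)$ is locally Lipschitz, and the chain rule for a convex function composed with a smooth map gives $DN(a)^{'}\phi\in\partial\bigl(W_2^2\circ N\bigr)(a)$, i.e. the vector with $j$-th entry $\|a\|_1^{-1}(\phi_j-\langle\hat a,\phi\rangle)$. Adding the $C^\infty$ term and applying the sum rule (valid since that summand is smooth) yields
\begin{align*}
  \frac{1}{\|a\|_1}\bigl(\phi-\langle\hat a,\phi\rangle\,\mathbf{1}_n\bigr)+2\bigl(\|a\|_1-\|b\|_1\bigr)\mathbf{1}_n\ \in\ \partial J(a),
\end{align*}
so $\partial J(a)\neq\varnothing$ for every $a\in\mathcal{D}$, which is the claim. (This representative is precisely what one would feed to the optimizer as the adjoint source for the mixed distance.)

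The point requiring the most care is the meaning of the outer subgradient $\phi$: $W_2^2$ lives intrinsically on the affine slice $\Sigma_n$, so $\phi$ is determined only up to the gauge shift $\phi\mapsto\phi+t\mathbf{1}_n$, $\psi\mapsto\psi-t\mathbf{1}_n$ in \eqref{eq:discrete_dual_problem}. This ambiguity is harmless here precisely because $DN(a)^{'}\mathbf{1}_n=0$, so $DN(a)^{'}\phi$ — and hence the subgradient of $J$ above — is independent of the chosen optimal $\phi$. Equivalently, one may replace $W_2^2$ by its everywhere-defined convex extension $\hat a\mapsto\max_{(\phi,\psi)\in R_C}\bigl(\phi^{'}\hat a+\psi^{'}\hat b\bigr)$ (finite on the hyperplane $\mathbf{1}_n^{'}\hat a=1$ that contains the range of $N$, and $+\infty$ off it) and apply the convex chain rule directly; both routes deliver the same subgradient. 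The remaining facts — convexity and polyhedrality of the LP value function, smoothness of $N$ and of the quadratic mass term, and the sum rule — are routine.
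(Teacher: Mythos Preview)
Your argument is correct, and it is considerably more careful than the paper's. The paper's own proof is a single sentence: it says the subdifferentiability ``straightly follows by Proposition~\ref{prop:subdifferentiability}.'' In other words, the paper simply cites the subdifferentiability of $W_2^2(\cdot,\hat\beta)$ at $\hat a$ and leaves the passage from $\hat a$ back to $a$ implicit. Your route is the same core idea made rigorous: you factor $J$ through the smooth normalization $N:a\mapsto\hat a$, invoke Proposition~\ref{prop:subdifferentiability} for the outer convex (polyhedral) LP value function, and then apply the Clarke chain rule and the smooth sum rule to pull the subgradient back to the $a$-variable. What you gain over the paper is (i) an explicit acknowledgment that $J$ need not be convex in $a$, so the relevant notion is the Clarke subdifferential rather than the convex one; (ii) an explicit representative $\|a\|_1^{-1}(\phi-\langle\hat a,\phi\rangle\mathbf{1}_n)+2(\|a\|_1-\|b\|_1)\mathbf{1}_n$ for a subgradient, which is exactly what is later fed into the adjoint computation; and (iii) a clean resolution of the gauge ambiguity $\phi\mapsto\phi+t\mathbf{1}_n$ in the dual problem via $DN(a)^{'}\mathbf{1}_n=0$. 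The paper's one-line version buys brevity but elides precisely these three points.
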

\begin{proof}
  The subdifferentiability of $J(a) = J(\alpha; \beta)$ with respect to vector $a$ straightly follows by Proposition \ref{prop:subdifferentiability}.
\end{proof}

The following propositions show that the objective function \eqref{eq:variational_fn_mixed_aX} with the proposed mixed distance retains the convex properties of the square of $2$-Wasserstein distance.
\begin{proposition}
  Given two discrete measures $\alpha$ and $\beta$.
  Given the shift direction $\eta \in \mathbb{R}^d$ and the shift size $s>0$, the shift of discrete measure $\alpha$ is denoted as
  \begin{align}
    \alpha_s = \sum_{i=1}^n a_i \delta_{x_i + s\eta},
  \end{align}
  The objective function $J(s) = J(\alpha_s; \beta)$ is defined by equation \eqref{eq:variational_fn_mixed}.
  Then $J(s)$ is convex with respect to $s$.
\end{proposition}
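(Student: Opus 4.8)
The plan is to reduce the statement to Theorem~\ref{thm:convex_wrt_shift} by exploiting the fact that translating the support points of a discrete measure leaves its total mass untouched. The first step is to observe that the shift $\alpha_s = \sum_{i=1}^n a_i \delta_{x_i + s\eta}$ changes only the sampling set, not the weight vector $a$. Hence $\|a\|_1$ does not depend on $s$, and the L1 term $(\|a\|_1 - \|b\|_1)^2$ appearing in $J(\alpha_s;\beta)$ is a constant function of $s$.

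The second step is to check that the normalization operation commutes with the shift. Since $\|a\|_1$ is unchanged, the normalized weights $\hat a_i = a_i/\|a\|_1$ are the same for $\alpha_s$ as for $\alpha$, so
\[
  \widehat{\alpha_s} = \frac{1}{\|a\|_1}\sum_{i=1}^n a_i \delta_{x_i + s\eta} = \sum_{i=1}^n \hat a_i \delta_{x_i + s\eta} = (\hat\alpha)_s,
\]
i.e. the normalization of the shifted measure is exactly the shift (with the same direction $\eta$ and size $s$) of the normalized measure $\hat\alpha$.

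The third step is to apply Theorem~\ref{thm:convex_wrt_shift}. Because $\hat a \in \Sigma_n$ and $\hat b \in \Sigma_m$, the measures $\hat\alpha$ and $\hat\beta$ are discrete probability measures, so the theorem applies and yields that $s \mapsto W_2^2\bigl((\hat\alpha)_s, \hat\beta\bigr) = W_2^2\bigl(\widehat{\alpha_s}, \hat\beta\bigr)$ is convex in $s$. Combining with the first step, $J(s) = W_2^2\bigl(\widehat{\alpha_s}, \hat\beta\bigr) + (\|a\|_1 - \|b\|_1)^2$ is the sum of a convex function of $s$ and a constant, hence convex, which proves the claim.

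There is no real analytic obstacle here; the only point requiring a little care is the bookkeeping in the second step — verifying cleanly that normalization and translation commute and that the hypotheses of Theorem~\ref{thm:convex_wrt_shift} (discrete probability measures) are genuinely satisfied by $\hat\alpha$, $\hat\beta$ — after which the result is immediate.
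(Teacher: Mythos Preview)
Your proof is correct and follows essentially the same route as the paper's own argument: write $J(s) = W_2^2(\hat\alpha_s,\hat\beta) + (\|a\|_1 - \|b\|_1)^2$, note the second term is constant in $s$, and invoke Theorem~\ref{thm:convex_wrt_shift} for the first. Your treatment is in fact more careful than the paper's, which skips the verification that normalization commutes with the shift.
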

\begin{proof}
  By the construction of the objective function
  \begin{align}
    J(s) = J(\alpha_s, \beta) = W_2^2(\hat \alpha_s, \hat \beta) + \left( \|a\|_1 - \|b\|_1 \right)^2.
  \end{align}
  The convexity of $J(s)$ follows the $W_2^2(\hat \alpha_s, \hat \beta)$ is convex with respect to $s$ by Theorem \ref{thm:convex_wrt_shift}.
\end{proof}
\begin{proposition}
  Given a discrete measure $\alpha = \sum_{i=1}^n a_i \delta_{x_i}$, where $a \in \mathbb{R}_+^{n}$.
  The dilation of measure $\alpha$ is given by
  \begin{align}
    \alpha_A = \sum_{i=1}^n a_i \delta_{A x_i},
  \end{align}
  here $A$ is a dilation transform matrix which is symmetric positive definite.
  Let $\lambda_1, \cdots, \lambda_d$ be the eigenvalues of matrix $A$.
  The objective function $J(\lambda_1, \cdots, \lambda_d) = J(\alpha_A; \alpha)$ is defined by equation \eqref{eq:variational_fn_mixed}.
  Then $J(\lambda_1, \cdots, \lambda_d)$ is convex with respect to $\lambda_1, \cdots, \lambda_d$.
\end{proposition}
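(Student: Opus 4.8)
The plan is to reduce the claim directly to Theorem~\ref{thm:convex_wrt_dilation} by observing that normalization commutes with dilation and that the $L^1$ term of $J$ drops out. First I would write $J(\lambda_1, \cdots, \lambda_d) = J(\alpha_A; \alpha)$ out explicitly from \eqref{eq:variational_fn_mixed}. Since the dilated measure $\alpha_A = \sum_{i=1}^n a_i \delta_{Ax_i}$ carries exactly the same weight vector $a$ as $\alpha$, the mass-difference term is $(\|a\|_1 - \|a\|_1)^2 = 0$, so $J(\lambda_1, \cdots, \lambda_d) = W_2^2(\widehat{\alpha_A}, \hat\alpha)$ with the hats as in \eqref{eq:normalized_measures}.

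Next I would check that $\widehat{\alpha_A}$ is itself the dilation of $\hat\alpha$ by the same matrix $A$. Indeed $\widehat{\alpha_A} = \tfrac{1}{\|a\|_1}\sum_{i=1}^n a_i \delta_{Ax_i} = \sum_{i=1}^n \hat a_i \delta_{Ax_i}$ while $\hat\alpha = \sum_{i=1}^n \hat a_i \delta_{x_i}$ with $\hat a \in \Sigma_n$, so $\widehat{\alpha_A} = (\hat\alpha)_A$ in the notation of Theorem~\ref{thm:convex_wrt_dilation}. Hence $J(\lambda_1, \cdots, \lambda_d) = W_2^2((\hat\alpha)_A, \hat\alpha)$, and $\hat\alpha$ is a bona fide discrete probability measure supported on the fixed set $X$.

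Finally I would invoke Theorem~\ref{thm:convex_wrt_dilation} with $\hat\alpha$ (which lies in $\Sigma_n$) in place of $\alpha$ and with the same symmetric positive definite $A$; that theorem gives directly that $W_2^2((\hat\alpha)_A, \hat\alpha)$ is convex in the eigenvalues $\lambda_1, \cdots, \lambda_d$ of $A$, which is exactly the assertion.

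I do not expect a genuine obstacle here: the only care needed is the bookkeeping that the hypotheses of Theorem~\ref{thm:convex_wrt_dilation} survive normalization and that the vanishing of the mass-difference term is legitimate because dilation preserves total mass. If a self-contained argument were preferred, one could instead use that $P = \text{diag}(\hat a)$ is the optimal plan from $(\hat\alpha)_A$ to $\hat\alpha$ (again by Theorem~\ref{thm:convex_wrt_dilation}), whence $W_2^2((\hat\alpha)_A, \hat\alpha) = \sum_{i=1}^n \hat a_i\, |(A-I)x_i|^2$; writing $A = Q\,\text{diag}(\lambda_1, \cdots, \lambda_d)\,Q'$ for a fixed orthogonal $Q$, each summand equals $\sum_{k=1}^d (\lambda_k - 1)^2 (Q' x_i)_k^2$, a nonnegative combination of convex functions of the $\lambda_k$, hence convex. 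Citing Theorem~\ref{thm:convex_wrt_dilation} makes this computation unnecessary.
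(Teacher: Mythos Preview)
Your proposal is correct and follows essentially the same approach as the paper: both observe that the mass-difference term vanishes because dilation preserves the weight vector, reducing $J$ to $W_2^2(\hat\alpha_A,\hat\alpha)$, and then invoke Theorem~\ref{thm:convex_wrt_dilation}. You are slightly more careful in spelling out that $\widehat{\alpha_A}=(\hat\alpha)_A$, which the paper leaves implicit, and your optional self-contained computation at the end is a nice addition but not needed.
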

\begin{proof}
  By the construction of the objective function
  \begin{align}
    J(\lambda_1, \cdots, \lambda_d) = J(\alpha_A; \alpha) = W_2^2(\hat \alpha_A, \hat \alpha) + \left( \|a\|_1 - \|a\|_1 \right)^2.
  \end{align}
  The convexity of $J(\lambda_1, \cdots, \lambda_d)$ follows that $W_2^2(\hat \alpha_A, \hat \alpha)$ is convex with respect to $\lambda_1, \cdots, \lambda_d$ by Theorem \ref{thm:convex_wrt_dilation}.
\end{proof}
\begin{proposition}
  Given two discrete measures $\alpha$ and $\beta$.
  Denote the mass change of measure $\alpha$ by
  \begin{align}
    \alpha_k = \sum_{i=1}^n (k a_i) \delta_{x_i},
  \end{align}
  where $k \in \mathbb{R}_+$.
  The objective function $J(k) = J(\alpha_k; \beta)$ is defined by equation \eqref{eq:variational_fn_mixed}.
  Then $J(k)$ is convex with respect to $k$.
\end{proposition}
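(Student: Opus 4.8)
The plan is to reduce $J(k)$ to an explicit elementary function of $k$ by exploiting the fact that the normalization appearing in the first term of $J$ is invariant under a global rescaling of the mass. First I would compute the total mass of $\alpha_k$: since $\alpha_k = \sum_{i=1}^n (k a_i)\delta_{x_i}$ we have $\|k a\|_1 = k\|a\|_1$, and hence the normalized measure is
\[
  \hat\alpha_k = \frac{1}{k\|a\|_1}\sum_{i=1}^n (k a_i)\,\delta_{x_i} = \frac{1}{\|a\|_1}\sum_{i=1}^n a_i\,\delta_{x_i} = \hat\alpha,
\]
which is independent of $k$. Therefore the Wasserstein term is frozen: $W_2^2(\hat\alpha_k,\hat\beta) = W_2^2(\hat\alpha,\hat\beta)$ does not depend on $k$.

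Next I would substitute this into the definition \eqref{eq:variational_fn_mixed} of the objective, using $\|k a\|_1 = k\|a\|_1$, to get
\[
  J(k) = W_2^2(\hat\alpha,\hat\beta) + \bigl(k\|a\|_1 - \|b\|_1\bigr)^2 .
\]
The first summand is constant in $k$, and the second summand is a quadratic polynomial in $k$ with leading coefficient $\|a\|_1^2 \ge 0$, hence convex on $\mathbb{R}$ (in particular on $\mathbb{R}_+$). A sum of a constant function and a convex function is convex, so $J(k)$ is convex in $k$, which completes the argument.

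The only issue demanding any care — and it is minor — is the degenerate case $\|a\|_1 = 0$, where $\hat\alpha$ is undefined; but then $\alpha_k \equiv 0$ and $J(k) \equiv \|b\|_1^2$ is trivially convex, so one may simply assume $a \neq 0$ as is already implicit in \eqref{eq:normalized_measures}. Unlike the preceding propositions on shift and dilation, which genuinely invoke Theorem \ref{thm:convex_wrt_shift} and Theorem \ref{thm:convex_wrt_dilation} to handle a varying Wasserstein term, here the Wasserstein term is constant and all of the convexity comes from the elementary squared-$L^1$ term; I do not anticipate a substantive obstacle.
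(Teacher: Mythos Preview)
Your proposal is correct and is essentially the same argument as the paper's: observe that $\hat\alpha_k = \hat\alpha$ so the Wasserstein term is constant in $k$, and conclude convexity from the remaining quadratic term $(k\|a\|_1 - \|b\|_1)^2$. Your write-up is in fact slightly more explicit than the paper's (you spell out the normalization computation and note the degenerate $a=0$ case), but the content is the same.
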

\begin{proof}
  By the construction of the objective function
  \begin{align}
    J(k) = J(\alpha_k; \beta) = W_2^2(\hat \alpha_k, \hat \beta) + \left( k\|a\|_1 - \|b\|_1 \right)^2.
  \end{align}
  Since $\hat \alpha_k = \hat \alpha$, then the convexity of $J(k)$ follows by the second term of the above equation.
\end{proof}

The numerical evaluation of $J(\alpha; \beta)$ is straightforward.
Suppose the support of $\alpha$ and $\beta$ are fixed, the entropy regularization method can be used to define a regularized $J_\varepsilon$ with
\begin{align}
  J_\varepsilon(\alpha; \beta) = W_{2, \varepsilon}^2 (\hat a, \hat b) + (\|a\|_1 - \|b\|_1)^2.
\end{align}
The first part of the $J_\varepsilon(\alpha; \beta)$ can be evaluated with the Sinkhorn algorithm \ref{algorithm:sinkhorn}.
When the support of $\alpha$ is fixed, the regularized objective function $J_\varepsilon(\alpha; \beta)$ can be written as $J_\varepsilon (a)$, and the gradient of $J_\varepsilon(a)$ with respect to $a$ is given by
\begin{align}
  \nabla J_\varepsilon(a) = (D \hat a)' \nabla_{\hat a} W_{2, \varepsilon}^2 (\hat a, \hat b) + 2 (\|a\|_1-\|b\|_1) \bm{1}_n,
\end{align}
where $\nabla_{\hat a} W_{2, \varepsilon}^2 (\hat a, \hat b)$ is the gradient of $W_{2, \varepsilon}^2 (\hat a, \hat b)$ with respect to the first entry.
The Jacobian matrix $D \hat a$ is given by
\begin{align}
  (D \hat a)_{i,j} = \frac{\partial \hat a_i}{\partial a_j} = \begin{cases}
    -\frac{a_i}{(\sum_k a_k)^2}, \quad &\text{if } i \neq j,\\
    \frac{1}{\sum_k a_k} - \frac{a_i}{(\sum_k a_k)^2}, \quad &\text{if } i = j.
  \end{cases}
\end{align}
In practice, a mass balancing parameter $\lambda_m>0$ can be introduced to the $J_\varepsilon$ as
\begin{align}
  J_\varepsilon(\alpha; \beta) = W_{2,\varepsilon}^2(\hat a, \hat b) + \lambda_m(\|a\|_1 - \|b\|_1)^2.
\end{align}

\section{Normalization methods}

As discussed in the previous section, the OT distance is convex with respect to shift and dilation, and that is our initial idea to introduce the OT distance to the seismic inverse problem.
However, normalization methods are needed to extend the OT distance to signed signals, and the convex properties may be affected.
In this section, we discuss the convex behavior with both linear and exponential normalization for both UOT distance and mixed L1/Wasserstein distance.

Instead of focusing on the theoretical properties, we are working with numerical examples to show how the normalizations behaves.
The normalization methods here can only partially solve the problem that generalizing the OT distance to compare the difference between signed signals.
However, the numerical examples provided in this section and in the following sections show that by introducing the optimal transport based distances, the inverse results of the FWI problem are indeed improved in certain cases.

Since the trace-by-trace strategy is going to be used, we focus on comparing the difference between one-dimensional signals with UOT distance and the mixed L1/Wasserstein distance.
The signal $a(t)$ and $b(t)$ are defined on the time domain.
When $t = (\delta_{t_1}, \cdots, \delta_{t_n})$ is fixed, the signals $a(t)$ and $b(t)$ can be represented as $n$-dimensional vectors $a = (a_1, \cdots, a_n)\in \mathbb{R}^n$, $b= (b_1, \cdots, b_n)\in \mathbb{R}^n$.

The L2 distance, UOT distance, and mixed L1/Wasserstein distance are going to be compared in several numerical experiments, and we use $d(\cdot, \cdot)$ to represent the distance used in the objective functions.
Let $d_u(\cdot, \cdot)$ represent the UOT distance 
\begin{align} \label{eq:UOT_distance}
  d_u(a, b) &= W_{2,\varepsilon_u, \varepsilon}^2(a,b),
\end{align}
and $d_m(\cdot, \cdot)$ represent the mixed L1/Wasserstein distance,
\begin{align} \label{eq:mixed_distance}
  d_m(a,b) &= J(a) = W_{2,\varepsilon}^2 (\hat a, \hat b) + \lambda_m (\|a\|_1 - \|b\|_1)^2,
\end{align}
where $J(a)$ is defined by equation \eqref{eq:variational_fn_mixed}.
Both UOT distance and mixed L1/Wasserstein distance can be evaluated through the entropy regularization approach.
The smaller the regularization coefficient $\varepsilon$ is used, the more accurate result we can achieve.
On the other hand, the regularization coefficient $\varepsilon$ can not be too small due to the machine precision.
In this work, the regularization coefficients are chosen as small as possible in the numerical experiments.

We focus on the linear and exponential normalizations in this work.
Given a normalization parameter $k$, the linear normalization is defined as
\begin{align} \label{eq:linear_normalization}
  h_{\text{l}} (a,k)(t) = a(t) + k,
\end{align}
and the exponential normalization is defined as
\begin{align} \label{eq:exp_normalization}
  h_{\text{e}} (a,k)(t) = e^{k a(t)}.
\end{align}
We demonstrate the behavior of the above two normalizations with numerical examples in the following part of this section.



Usually, the seismic event can be approximated with a linear combination of the Ricker wavelet, i.e.,
\begin{align}
  s(t) = \left(1 - \frac{(t-t_0)^2}{\sigma^2}\right) e^{\frac{-(t-t_0)^2}{2\sigma^2}}.
\end{align}

\begin{figure}[h]
  \centering
  \includegraphics[width=1\textwidth]{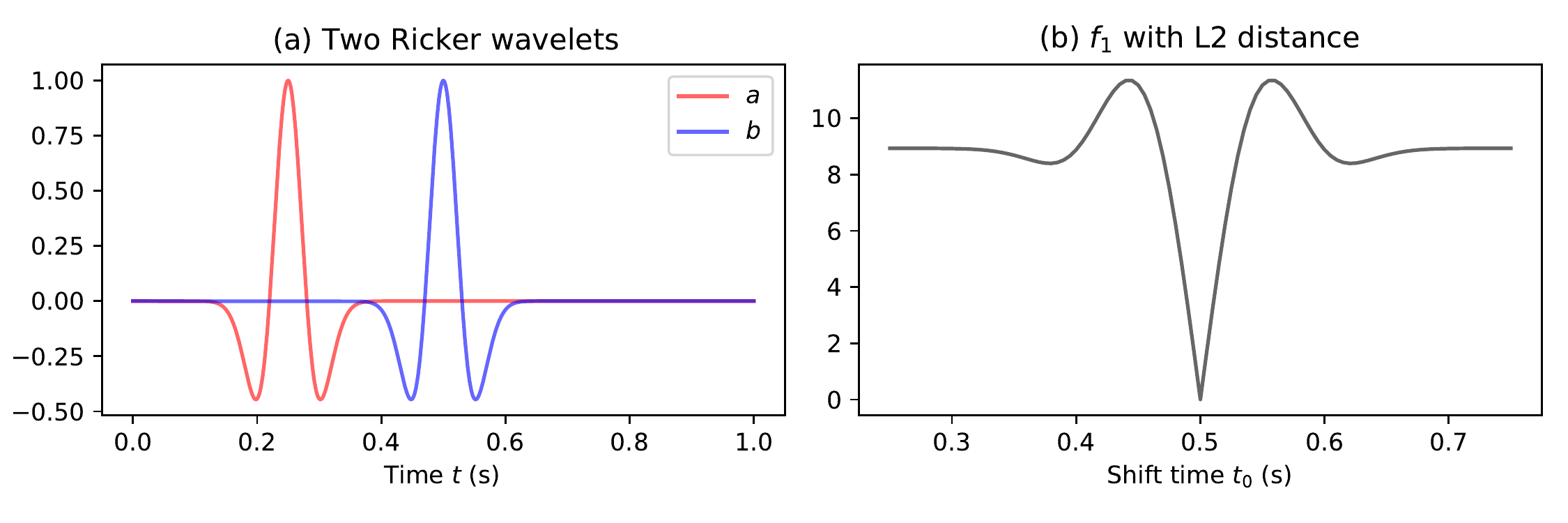}
  \caption{(a): Ricker wavelets $a$ and $b$. (b): The objective function $f_1(t_0)$ with L2 distance.}
  \label{fig:ch4_section2_1}
\end{figure}

First, we investigate the behavior for the time-shift of Ricker wavelets with L2 distance, UOT distance, and mixed L1/Wasserstein distance.
Let $\sigma = 0.03$, $t_0 \in [0.25, 0.75]$, the sampling frequency is $1000$ Hz.
Let $b$ be fixed with the center at $0.5$ s, and $a$ is shifting from left to right, denote $a$ and $b$ as
\begin{align}
  a(t_0,t) &= \left(1 - \frac{(t-t_0)^2}{0.03^2}\right) e^{\frac{-(t-t_0)^2}{2\times 0.03^2}}, \\
  b(t) &= \left(1 - \frac{(t-0.5)^2}{0.03^2}\right) e^{\frac{-(t-0.5)^2}{2\times 0.03^2}},
\end{align}
as in Figure \ref{fig:ch4_section2_1} (a).
We fix $b$ as the reference signal and shift the center of $a$ from $0.25$ s to $0.75$ s.
Define the objective function as
\begin{align}
  f_1(t_0, k) = d(h(a(t_0,t), k), h(b(t), k)),
\end{align}
where $d$ can be UOT distance and mixed L1/Wasserstein distance as equation \eqref{eq:UOT_distance} and \eqref{eq:mixed_distance}.
No normalization method is implied for the L2 distance.
The normalization function $h(\cdot, \cdot)$ can be linear and the exponential normalizations defined by equation \eqref{eq:linear_normalization} and \eqref{eq:exp_normalization}.

To evaluate the UOT distance and mixed L1/Wasserstein distance, we set the entropy regularization parameter $\varepsilon = 1\times 10^{-3}$ to guarantee that the optimal transport distance is evaluated accurately.
We set $\varepsilon_u = 1$ in the UOT distance and set $\lambda_m = 1\times 10^{-10}$ such that both UOT distance and mixed L1/Wasserstein distance have notable results for the time-shift.

The objective function $f_1(t_0)$ with L2 distance is shown in Figure \ref{fig:ch4_section2_1} (b).
One global minimum and two local minima are observed, which is a sign of the cycle-skipping artifact.

\begin{figure}[h]
  \centering
  \includegraphics[width=1\textwidth]{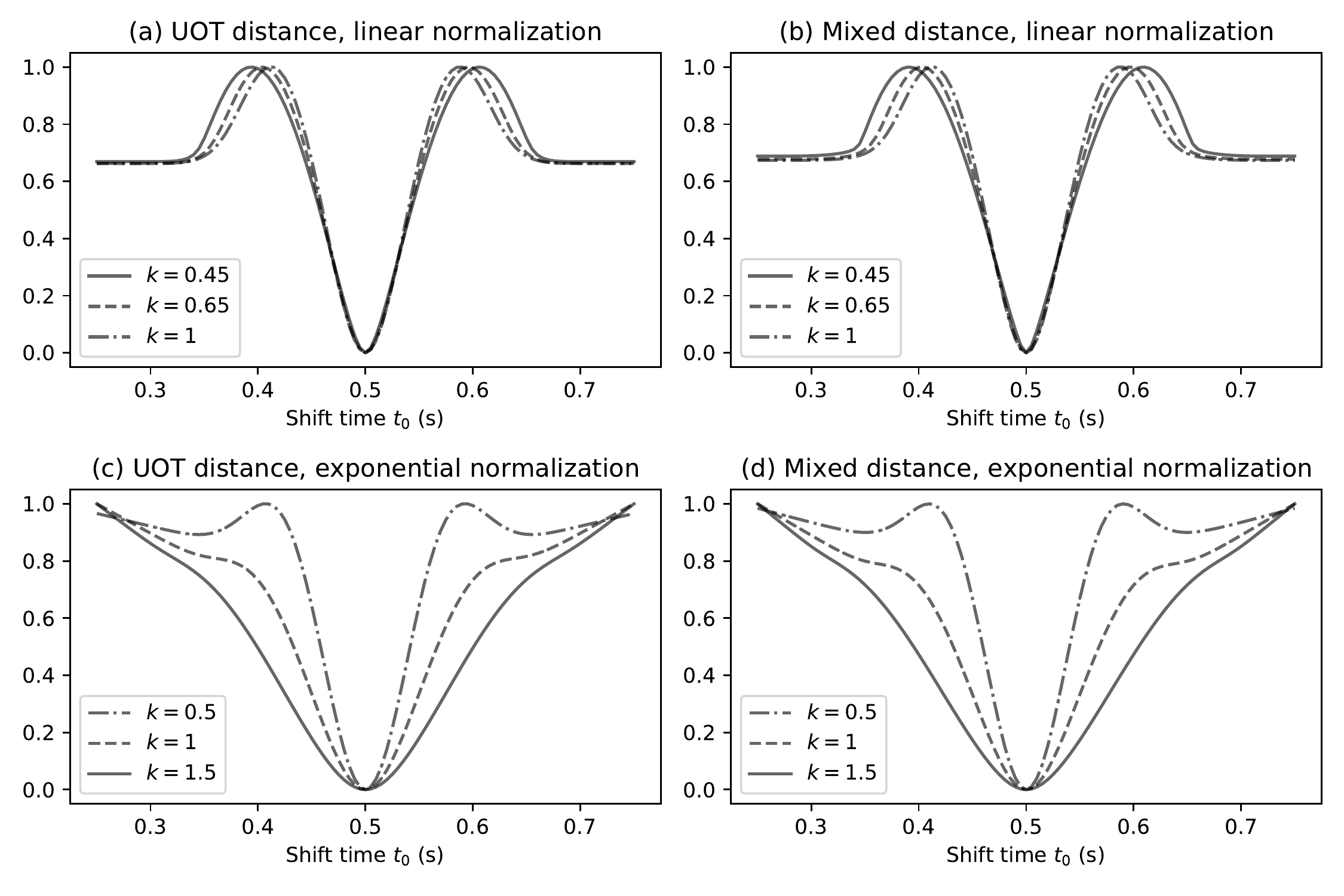}
  \caption{The normalized objective function $f_1(t_0, k)$ with UOT distance, mixed L1/Wasserstein distance and linear normalization, exponential normalization.}
  \label{fig:ch4_section2_2}
\end{figure}

The numerical results of normalized objective function $f_1(t_0, k)$ of both optimal transport based distances with both linear and exponential normalization are shown in Figure \ref{fig:ch4_section2_2}.
Comparing subfigures (a), (b) with subfigures (c) (d), the shape of normalized objective functions are similar for both normalization methods.
Compared to L2 distance, the cycle-skipping artifact is slightly reduced by both distances with linear normalization as shown in subfigures (a) and (b).
The smaller the normalization coefficient $k$ is used, the better performance can be achieved.
However, $k$ can not be less than the absolute value of the minimal value of $a$ and $b$, that is approximately $0.446259$ in this example.
In subfigures (c) and (d), as $k = 0.5$, the normalized objective function is similar to the case of (a) and (b), i.e., with one global minimum and two local minima.
Only one global minimum is obtained with the case $k = 1$ and $k = 1.5$, which means no cycle-skipping artifact occurs in this case.
Compared to L2 distance, both UOT distance and mixed L1/Wasserstein distance can mitigate the cycle-skipping artifact with proper normalization coefficient $k$.
Also, compared to the previous work in \cite{metivier2016optimal,yong2019misfit}, the UOT distance and mixed distance provide more convex behavior than the 1-Wasserstein distance with respect to the time-shift.

\begin{figure}[h]
  \centering
  \includegraphics[width=1\textwidth]{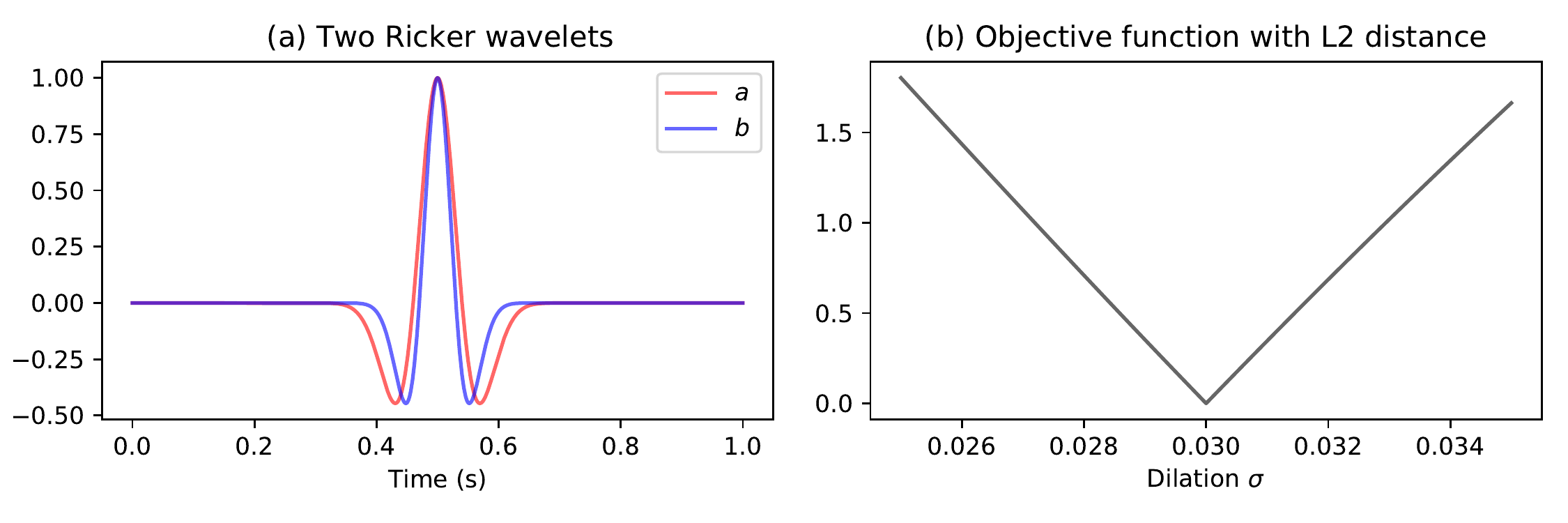}
  \caption{(a): Two Ricker wavelets $a$ and $b$. (b): The objective function $f_2(\sigma_0)$ with L2 distance.}
  \label{fig:ch4_section2_3}
\end{figure}

In the following example, we investigate the behavior with respect to the dilation of the Ricker wavelet.
Fix $t_0 = 0.5$, let $\sigma_0 \in [0.02, 0.04]$, the sampling frequency is still $1000$ Hz.
Let $b$ be fixed with $\sigma = 0.03$, and a is dilating with the change of the $\sigma_0$, denote $a$ and $b$ as
\begin{align}
  a(\sigma_0,t) = \left(1 - \frac{(t-0.5)^2}{\sigma_0^2}\right) e^{\frac{-(t-0.5)^2}{2\sigma_0^2}}, \\
  b(t) = \left(1 - \frac{(t-0.5)^2}{0.03^2}\right) e^{\frac{-(t-0.5)^2}{2\times 0.03^2}}.
\end{align}
One example is shown in Figure \ref{fig:ch4_section2_3} (a).
Define the objective function as
\begin{align}
  f_2(\sigma_0, k) = d(h(a(\sigma_0,t),k), h(b(t), k)),
\end{align}
where $d$ can be UOT distance and mixed L1/Wasserstein distance as equation \eqref{eq:UOT_distance} and \eqref{eq:mixed_distance}.
No normalization method is implied for the L2 distance.
The normalization function $h(\cdot, \cdot)$ can be linear and exponential normalization defined by equation \eqref{eq:linear_normalization} and \eqref{eq:exp_normalization}.
The computation coefficients $\varepsilon_u$, $\lambda_m$ are the same as in the previous shift Ricker example.

The normalized objective function $f_2(\sigma_0)$ with L2 distance is shown in Figure \ref{fig:ch4_section2_3} (b).
Only one global minimum is observed and it is located at point $\sigma_0 = 0.03$, and in this case $a(\sigma_0,t) = b(t)$.

\begin{figure}[h]
  \centering
  \includegraphics[width=1\textwidth]{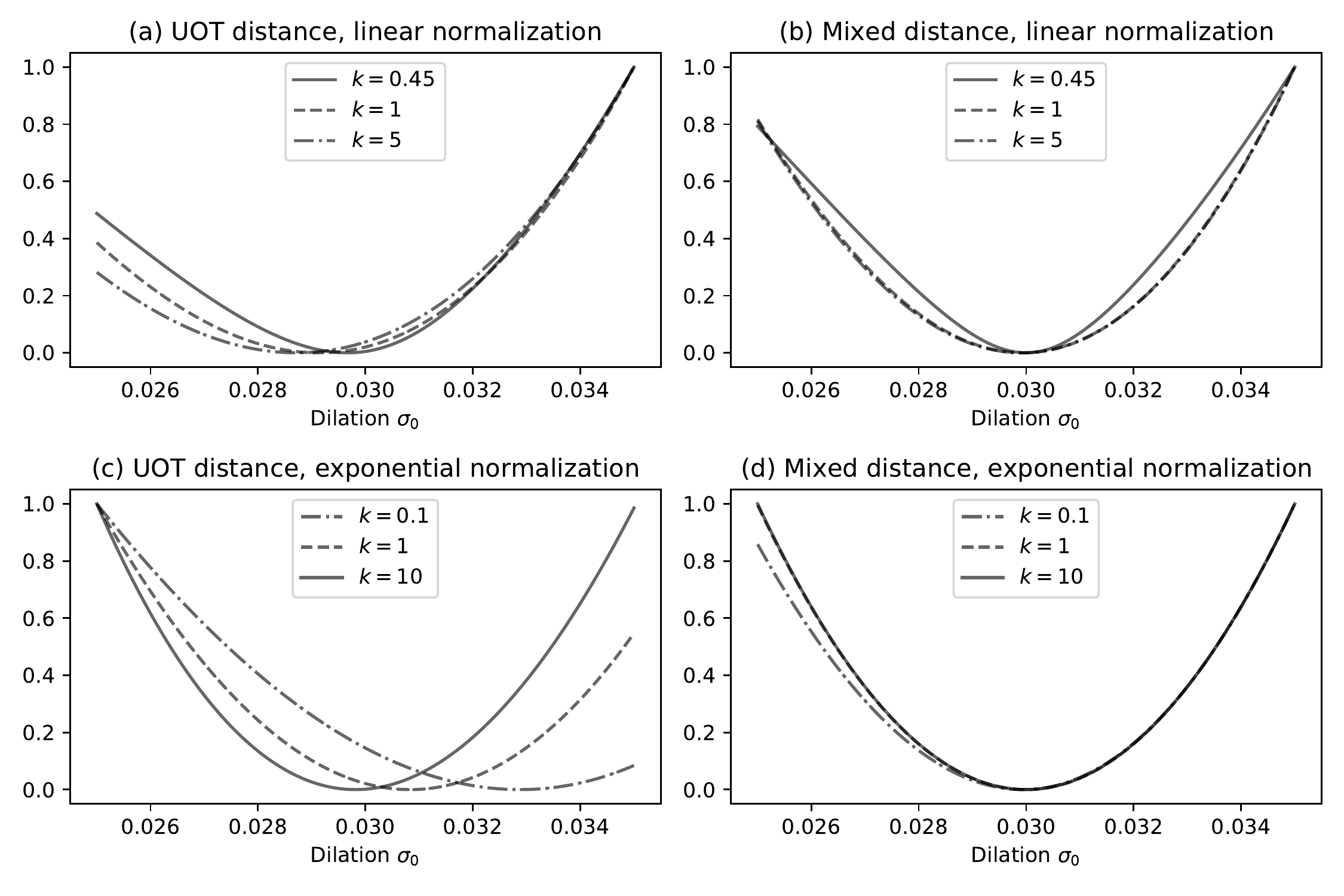}
  \caption{The normalized objective function $f_2(\sigma_0, k)$ with UOT distance, mixed L1/Wasserstein distance and linear normalization, exponential normalization.}
  \label{fig:ch4_section2_4}
\end{figure}

The numerical results of the normalized objective function $f_2(\sigma_0)$ of both optimal transport based distances with both linear and exponential normalization are shown in Figure \ref{fig:ch4_section2_4}.
Subfigure (a) shows the results of UOT distance with linear normalization, only one global minimum can be observed for each $k$.
However, when $k$ is larger, the position of the global minimum tends to be less than $0.03$ which is the global minimum we expect.
Subfigure (c) is the case of UOT distance with exponential normalization.
The position of the global minimum is larger than $0.03$ when $k$ is small.
The position of the global minimum is gradually decreasing when $k$ is increasing, and it will be less than $0.03$ when $k$ is large enough.
The results of mixed L1/Wasserstein distance with both normalizations are shown in subfigures (b) and (d).
There is only one global minimum in each of subfigures (b) and (d), and the global minimum is close to the point $\sigma_0 = 0.03$ for different normalization coefficients $k$.
Compared to the L2 distance, both UOT distance and mixed L1/Wasserstein distance can retain the convex property with respect to $\sigma_0$ with proper normalization and coefficient.

In conclusion, when the linear normalization method is used, the smaller normalization coefficient $k$ leads to a better convex behavior with respect to the time-shift.
However, the $k$ can not be arbitrarily small since it has to be larger than the absolute value of the minimum value of the signals.
Therefore the linear normalization is not encouraged.
On the other hand, both distances with exponential normalization and a larger coefficient $k$ will retain the convex properties with respect to the time-shift and dilation.
Notice that the $k$ can not be arbitrarily large for the UOT distance in order to maintain an accurate convex property for the dilation operation.
Also, to avoid the significant distortion of the waveform, the normalization parameter of exponential normalization should not be too small or large.
In practice, the normalization parameter should be chosen such that the maximum value of normalized signal is approximately in the interval from $1$ and $10$.
In this case, the wavefront of the seismic signal can be slightly amplified and the waveform is not significantly distorted.


\section{Applying the optimal transport based distances in full waveform inversion}

In this section, we formulate the FWI problem with UOT distance and mixed L1/Wasserstein distance by introducing the normalization methods discussed in the previous section.
The wave equation is used as the constraint PDE, and the computation of adjoint sources is provided.

Consider there are $N_s$ sources and $N_r$ receivers in the domain, and let $s = 1, \cdots, N_s$, $r = 1,\cdots, N_r$ be the indexes of the sources and the receivers.
Denote the objective function as
\begin{align}
  J(c,y_1, \cdots, y_{N_s}) = \sum_{s=1}^{N_s} \sum_{r=1}^{N_r} d (h ( Q_r y_s ), h(y_{\mathrm{d},s,r})),
\end{align}
where here $d$ is chosen as one of L2 distance, UOT distance, and mixed L1/Wasserstein distance.
The function $h$ can be linear normalization or exponential normalization, and the normalization coefficient $k$ is omitted.
When the L2 distance is used in the objective function, no normalization method is needed.
The operator $Q_r$ is the recording operator that maps the wavefield generated by the $s$-th source $y_s$ to the signals received by the $r$-th receiver.
The $y_{\mathrm{d},s,r}$ represents the received data by the $s$-th source and the $r$-th receiver.

The constraint PDE is given by
\begin{align}
  \frac{1}{c^2} \frac{\partial^2}{\partial t^2} y_s - \Delta y_s = f_s, \quad s = 1,\cdots, N_s,
\end{align}
where $f_s$ is the function of the $s$-th source.
In practice, a special technique such as absorbing boundary condition (ABC) or perfectly matched layer (PML) is needed to simulate the seismic wave propagating in an unbounded domain.
A numerical PDE method such as finite difference or finite element method is needed to discretize the system and numerically simulate the wave propagation.

Since the PDE is well-posed, it can be written in a compact form as $F_s(c) = y_s$.
Then the reduced objective function is given by
\begin{align} \label{eq:ch4_reduced_equation}
  f(c) = J(c, F_1(c), \cdots, F_{N_s}(c)).
\end{align}
The gradient of $f(c)$ can be achieved through the adjoint state method:
\begin{align} \label{eq:adj_method}
  \nabla f(c) = \sum_{s = 1}^{N_s} \int \frac{-2}{c^3} \left(\frac{\partial^2}{\partial t^2} u_s\right) v_s \ \mathrm{d} t.
\end{align}
Here $v_s$ is the solution of the adjoint equation with $s$-th source
\begin{align}
  \frac{1}{c^2} \frac{\partial^2}{\partial t^2} v_s - \Delta v_s = \tilde f_s,
\end{align}
where $\tilde f_s$ is the adjoint source with respect to the $s$-th constraint equation.
When L2 distance is applied in the objective function, the adjoint source is given by
\begin{align}
  \tilde f_s = - \sum_{r= 1}^{N_r} Q_r^{'} (Q_r y_s - y_{\mathrm{d},s,r}).
\end{align}
When the UOT distance and mixed L1/Wasserstein distance with linear normalization is used in the objective function, the adjoint source is given by
\begin{align}
  \tilde f_s = - \sum_{r= 1}^{N_r} Q_r^{'} \nabla_1 d (h ( Q_r y_s ), h(y_{\mathrm{d},s,r})),
\end{align}
where the $\nabla_1$ is the gradient of $d(\cdot, \cdot)$ with respect to the first term.
When the UOT distance and mixed L1/Wasserstein distance with exponential normalization is used in the objective function, the adjoint source is given by
\begin{align}
  \tilde f_s = - \sum_{r= 1}^{N_r} Q_r^{'} \left(k e^{k Q_r y_s}\right)\nabla_1 d (h ( Q_r y_s ), h(y_{\mathrm{d},s,r})).
\end{align}
Once the gradient $\nabla f(c)$ can be computed, the PDE constrained optimization problem can be solved by the gradient based optimization methods such as conjugate gradient method or L-BFGS method.

\section{Numerical examples}

Three full waveform inversion examples are provided in this section based on the formulation in the previous section.
We compare the numerical results generated by L2 distance, UOT distance, and mixed L1/Wasserstein distance with a two-parameter two-layer example, a cross-well example, and the Marmousi model. 
The exponential normalization method is used for UOT distance and mixed L1/Wasserstein distance.

\subsection{Example 1: Two-parameter two-layer model}

This example shows the difference of objective functions between the L2 distance, UOT distance, and mixed L1/Wasserstein distance of a toy model.
Due to the large size and nonlinear behavior of the FWI problem, we build a simplified two-parameter two-layer velocity model in two-dimension:
\begin{align}
  c(\delta c, z ) = c_0(x,z) + \delta c H(z),
\end{align}
where $H(z)$ is the Heaviside step function along the $z$ direction.
The factor $\delta c$ is the velocity perturbation of the bottom part for the background velocity $c_0(x,z)$.
The background velocity is chosen to be homogeneous with $c_0(x,z) = 1$ km/s.
The model is in a region with $1$ km wide and $1$ km deep, discretized into $101 \times 101$ grid points.
Only one source is used in this example, located at the center of the model and $0.05$ km depth with a $6$ Hz Ricker wavelet.
The sampling frequency is $300$ Hz and the sampling time is $2$ seconds.
There are 11 equally spaced receivers at the top of the region.

Define the objective function as:
\begin{align}
  f_3(\delta c, z) = f(c(\delta c, z)),
\end{align}
where $f(\cdot)$ is defined by equation \eqref{eq:ch4_reduced_equation}.
The true model of this example is $\delta c = 0.05$, $z = 0.51$ which is shown in Figure \ref{fig:ch4_ex2_model}.
We set $\delta c \in [-0.1, 0.2]$ with step size $0.005$, and $z \in [0.4,0.6]$ with step size $0.01$.
Since there is a velocity perturbation between the two layers at the depth $z$, a reflective seismic wave is generated as the seismic wave propagating through the interface, and it will be recorded by the receivers at the top of the model.
For different velocity models $c(\delta c, z)$, the position of the reflector $z$ controls the arriving time of the reflective wave, and the velocity difference $\delta c$ controls the amplitude of the reflective wave.
We generate the received data with the true model $c(0.05,0.51)$.
As the $\delta c$ and $z$ are changing, the reflective waves will interact with the above recorded data, which will cause the cycle-skipping artifact.
We evaluate $f_3$ for each $(\delta c, z)$ by using L2 distance, UOT distance, and mixed L1/Wasserstein distance respectively, the results are shown in Figure \ref{fig:ch4_ex2_result}.
Similar numerical examples with other kinds of optimal transport based distances are provided in the work \cite{engquist2013application, metivier2018optimal, metivier2016optimal}.

\begin{figure}[h!]
  \centering
  \includegraphics[width=0.5\textwidth]{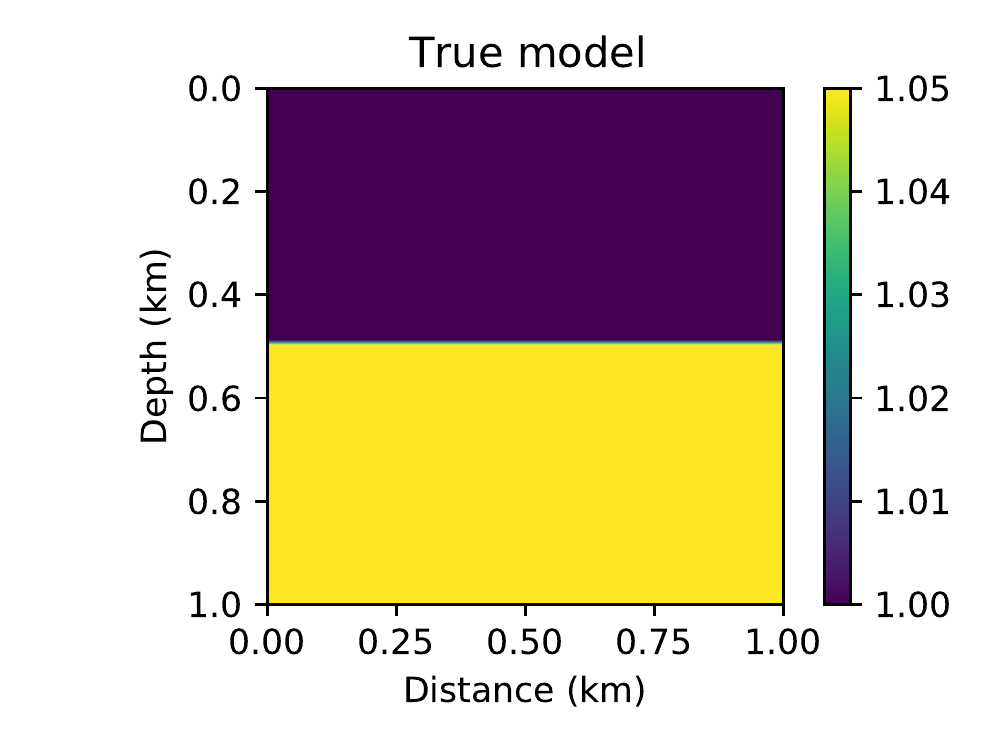}
  \caption{The true velocity model $c(0.05,0.51)$.}
  \label{fig:ch4_ex2_model}
\end{figure}

\begin{figure}[h!]
  \centering
  \includegraphics[width=1\textwidth]{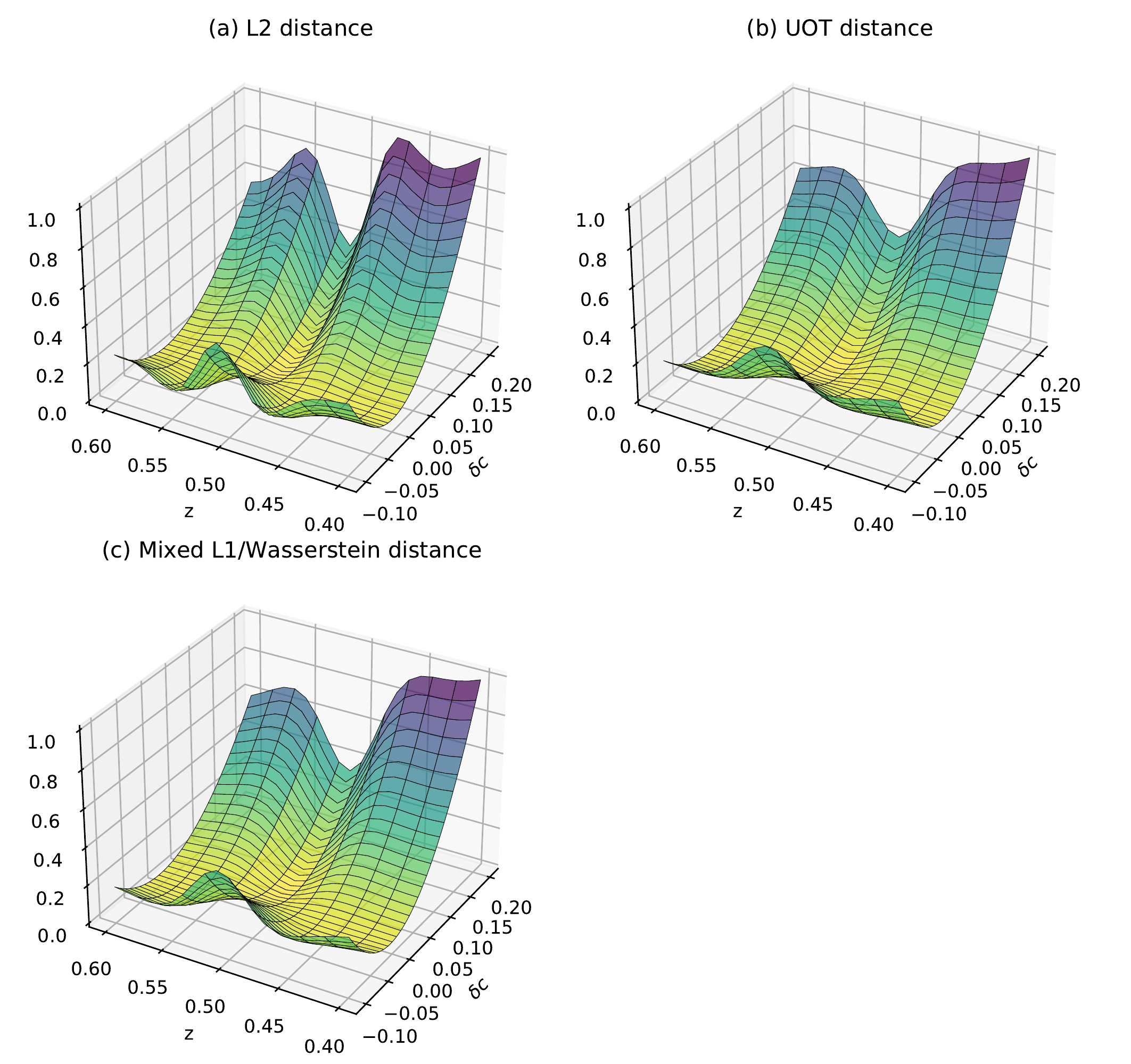}
  \caption{(a), (b), (c): the normalized objective function $f_3(\delta c, z)$ with L2 distance, UOT distance, and mixed L1/Wasserstein distance.}
  \label{fig:ch4_ex2_result}
\end{figure}

In Figure \ref{fig:ch4_ex2_result}, the $z$ axis is the normalized objective function $f_3(\delta c, z)$, and the other two axes are the perturbation $\delta c$ and the position $z$.
The objective function with L2 distance is shown in subfigure (a).
Notice the global minimum is located at the point $(0.05, 0.51)$, and there are several wrinkles in the surface of the objective function around the global minimum.
This suggests that when an initial model that is not close to the global minimum is provided, the optimization algorithm might be trapped in a local minimum due to the wrinkles.

The exponential normalization method is used in this example to compare the difference between signals with UOT distance and mixed L1/Wasserstein distance.
We set the normalization parameter $k = 5 \times 10^4$ such that the maximal value of the normalized signal is approximately in the interval between $1$ and $10$, and the entropy regularization parameter is $\varepsilon = 1 \times 10^{-4}$.
For UOT distance, we set the coefficient of mass balancing term to be $\varepsilon_u = 1$.
And we set the $\lambda_m = 1 \times 10^{-8}$ in the mixed L1/Wasserstein distance.
There are $500$ iterations performed for the computation of UOT and mixed distance.
This numerical example is performed on a server with the cpu model Intel Xeon CPU E7-8891 v4 @ 2.80GHz, and the code is written in the programming language Julia.
There are $12$ workers for the parallel computing are used in this numerical example. 
For each $(\delta c, z)$, $61 * 21 = 1281$ experiments are performed for UOT distance and the mixed distance, and 11 signals are compared for each experiment.
The computation time of the UOT example is 273 seconds, and the computation time of the mixed L1/Wasserstein distance is 228 seconds.

The objective functions with UOT distance and mixed L1/Wasserstein distance are shown in subfigures (b) and (c) respectively.
Compared with subfigure (a), the surface in subfigures (b) and (c) have fewer wrinkle structures.
For an initial model with $\delta c \in [-0.1, 0.2]$ and $z\in [0.4,0.6]$, the optimization algorithm is less likely to be trapped in a local minimum with both UOT distance and mixed L1/Wasserstein distance.
The travel time and shape of the reflected seismic signal is controlled by the parameter $\delta c$ and $z$.
The results of this numerical example are consistent with the Ricker wavelet examples in Section 3.

\subsection{Example 2: Cross-well model}

In this subsection, we perform the full waveform inversion in a two-dimensional cross-well model to investigate the behavior of the update step in the optimization algorithm with direct wave.
The inverse result may be very different from the global minimum when the initial model is inaccurate.
This phenomenon can be demonstrated by the Camembert model \cite{gauthier1986two}.
The previous research shows that the $2$-Wasserstein distance provides more accurate update steps compared to the L2 distance \cite{yang2018application}.
We repeat the Camembert model experiment here to show the optimal transport based distances in this work have the same advantage.

\begin{figure}[h!]
  \centering
  \includegraphics[width=1\textwidth]{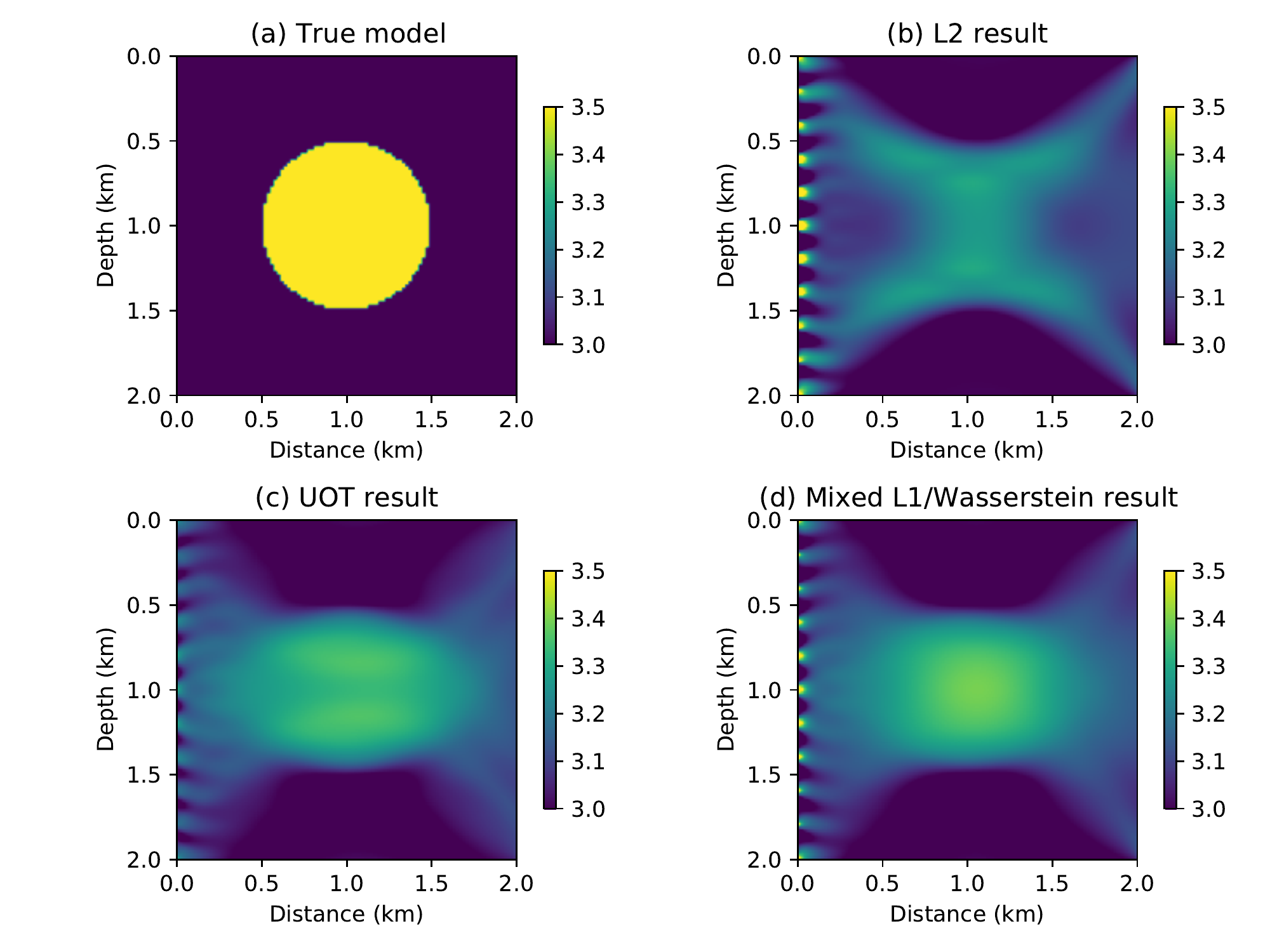}
  \caption{(a): True velocity model. (b): Inverse result with L2 distance. (c): Inverse result with UOT distance and exponential normalization. (d): Inverse result with mixed L1/Wasserstein distance and exponential normalization.}
  \label{fig:ch4_ex3_result}
\end{figure}

The model size is $2$ km by $2$ km, discretized into $101 \times 101$ grids with spatial grid size $0.02$ km.
The true velocity model is given by Figure \ref{fig:ch4_ex3_result} (a).
In the true model, the background velocity is $3$ km/s, and a single circle velocity anomaly is located at the center of the model with radius $0.5$ km and velocity $3.6$ km/s.
There are 11 are equally spaced sources located on the left boundary of the domain, and 101 equally spaced receivers located on the right boundary of the domain.
The synthetic data is generated with $10$ Hz Ricker wavelets and a homogeneous initial velocity model is used with velocity $3$ km/s.

\begin{figure}[h!]
  \centering
  \includegraphics[width=1\textwidth]{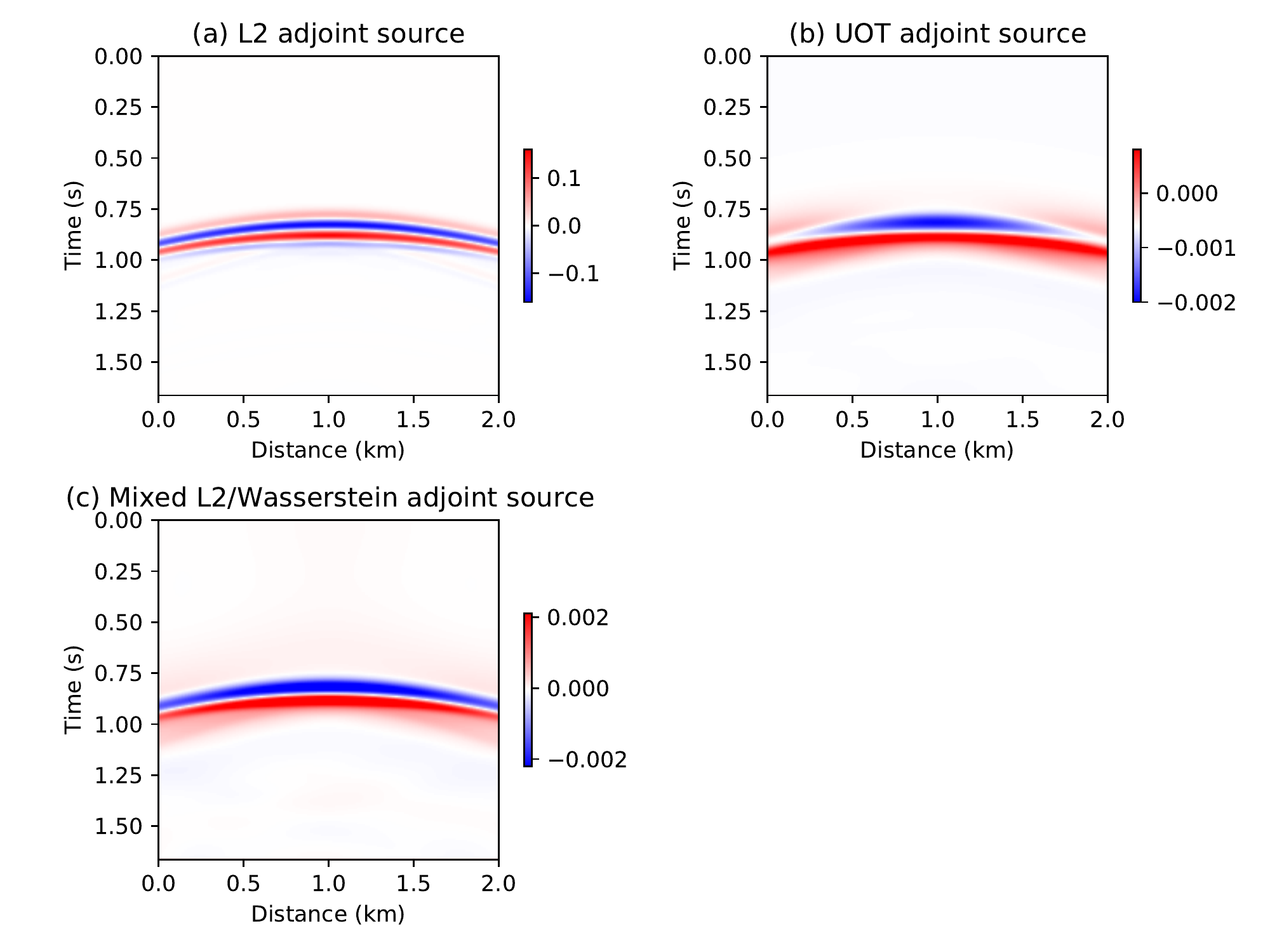}
  \caption{(a), (b), (c): The $6$-th adjoint sources at the first iteration with L2 distance, UOT distance, and mixed L1/Wasserstein distance, the exponential normalization method is used.}
  \label{fig:ch4_ex3_1}
\end{figure}

The inverse results with L2 distance, UOT distance, and mixed L1/Wasserstein distance is compared, the exponential normalization method is used for the optimal transport based distances.
The L-BFGS method with a memory parameter of $5$ is used as the optimization algorithm, and we perform $5$ iterations to show the directions of the velocity model updates.
Figure \ref{fig:ch4_ex3_1} shows the $6$-th adjoint source at the first iteration with different distances.
The adjoint sources generated by UOT distance and mixed L1/Wasserstein distance provide slow transitions on the positions of the seismic wavefront.
The frequency component of the seismic data is lower compared to the L2 case.
This leads to the gradients with fewer large-scale components due to the adjoint state method \eqref{eq:adj_method}.
Also, compared to the trace-by-trace strategy used in \cite{yang2018application}, the adjoint sources in subfigures (b) and (c) are more regular and consistent.

Figure \ref{fig:ch4_ex3_result} (b), (c), (d) display the inverse results with L2 distance, UOT distance, mixed L1/Wasserstein distance respectively.
All three results describe the presence of the velocity anomaly.
However, the L2 result contains abnormal disturbances at the left and right parts of the center, which will provide a wrong velocity update in future iterations.
Compared to the L2 result, both UOT distance and mixed L1/Wasserstein distance provide more regular updates with the shape similar to the velocity anomaly.
This experiment shows that both UOT distance and mixed L1/Wasserstein distance with exponential normalization can reduce the risk of wrong velocity updates, which may cause the optimization algorithm to be trapped in a local minimum.

\subsection{Example 3: Marmousi model}

In this subsection, we compare the inverse results with L2 distance, UOT distance, and mixed L1/Wasserstein distance through a two-dimensional reflection model.
The exponential normalization method is used for the optimal transport based distances.

\begin{figure}[h!]
  \centering
  \includegraphics[width=0.8\textwidth]{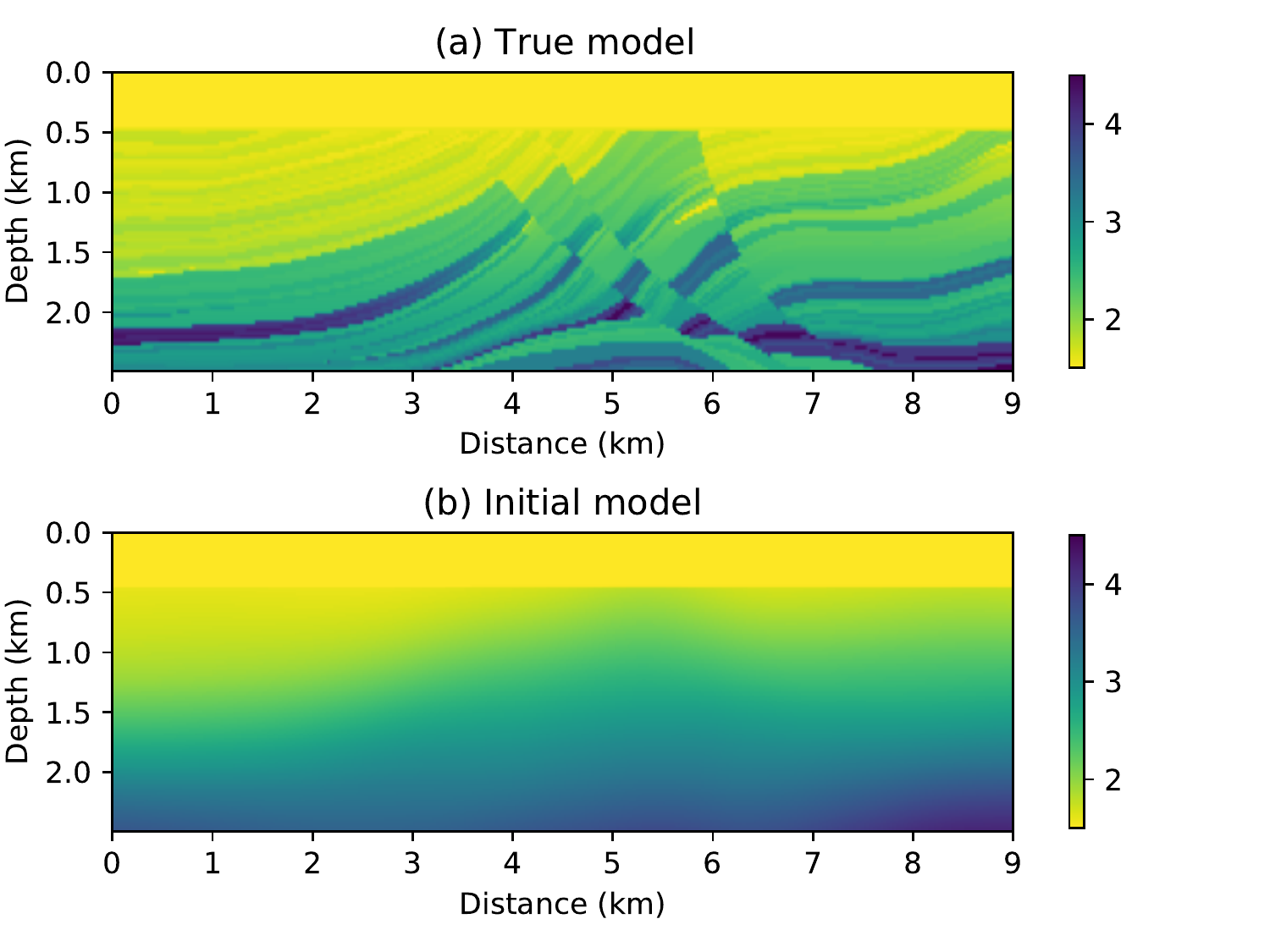}
  \caption{(a): True velocity model. (b): Initial velocity model.}
  \label{fig:ch4_ex4_model}
\end{figure}

As shown in Figure \ref{fig:ch4_ex4_model} (a), the true velocity model is a part of the Marmousi 2 model \cite{martin2006marmousi2} that provides strong velocity differences in both vertical and horizontal directions.
The velocity model is discretized into $84 \times 301 $ grids with the spatial size $0.03$ km.
There are $11$ equally spaced sources and $101$ equally spaced receivers located on the surface of the model.
The initial model is achieved through a two-dimensional Gaussian filter applied to the true model which is strongly smoothened, as shown in Figure \ref{fig:ch4_ex4_model} (b).
The sampling frequency is $400$ Hz, and the recording time is $3$ s.
The synthetic data is generated by the Ricker wavelet with central frequency $5$ Hz as the source function.
The perfectly matched layer technique is performed to simulate the seismic wave propagating in an unbounded domain.

\begin{figure}[h!]
  \centering
  \includegraphics[width=0.8\textwidth]{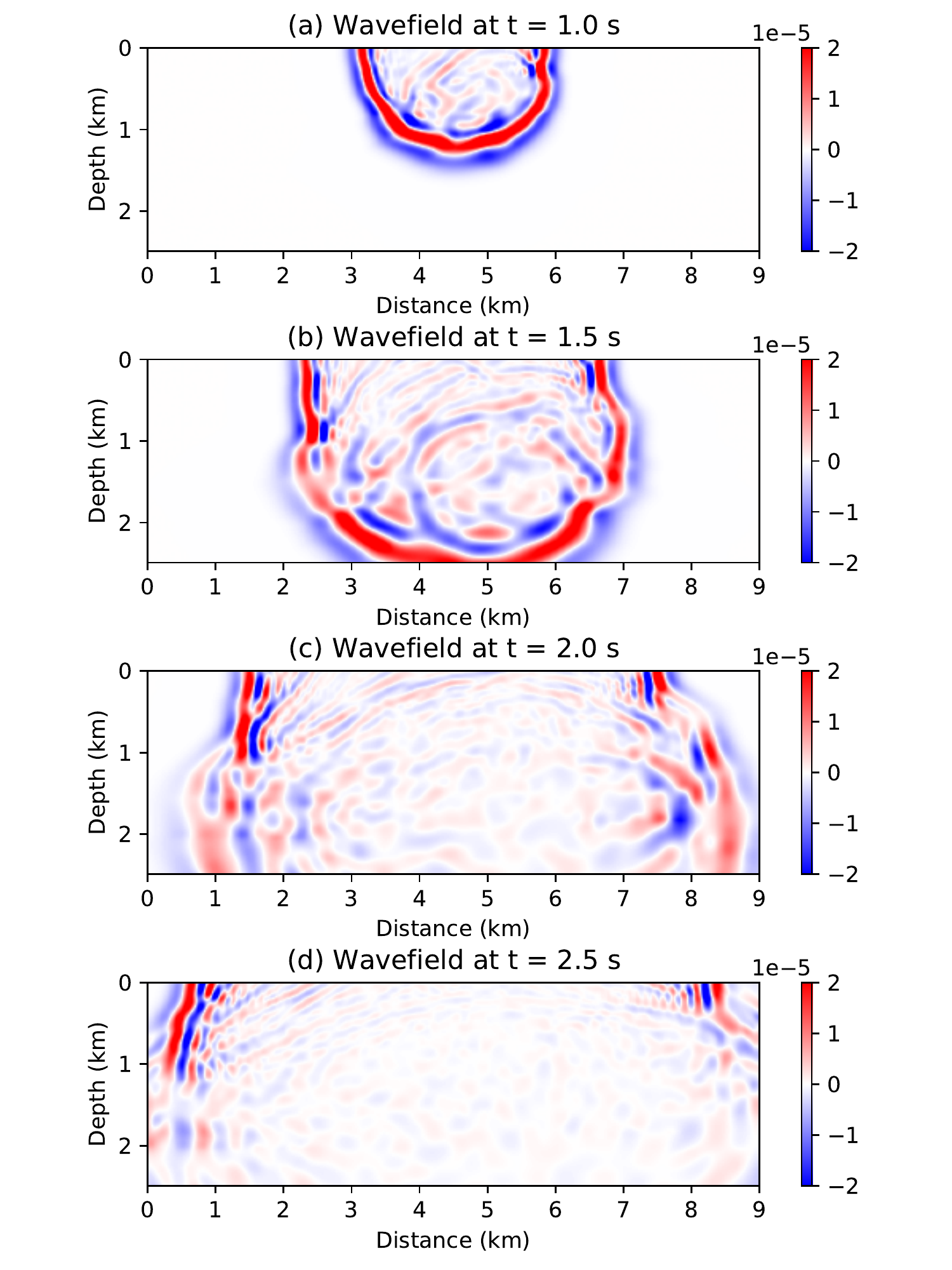}
  \caption{Snapshots of seismic wave generated by the $6$-th source propagating in the domain.}
  \label{fig:ch4_ex4_wavefield}
\end{figure}

\begin{figure}[h!]
  \centering
  \includegraphics[width=1\textwidth]{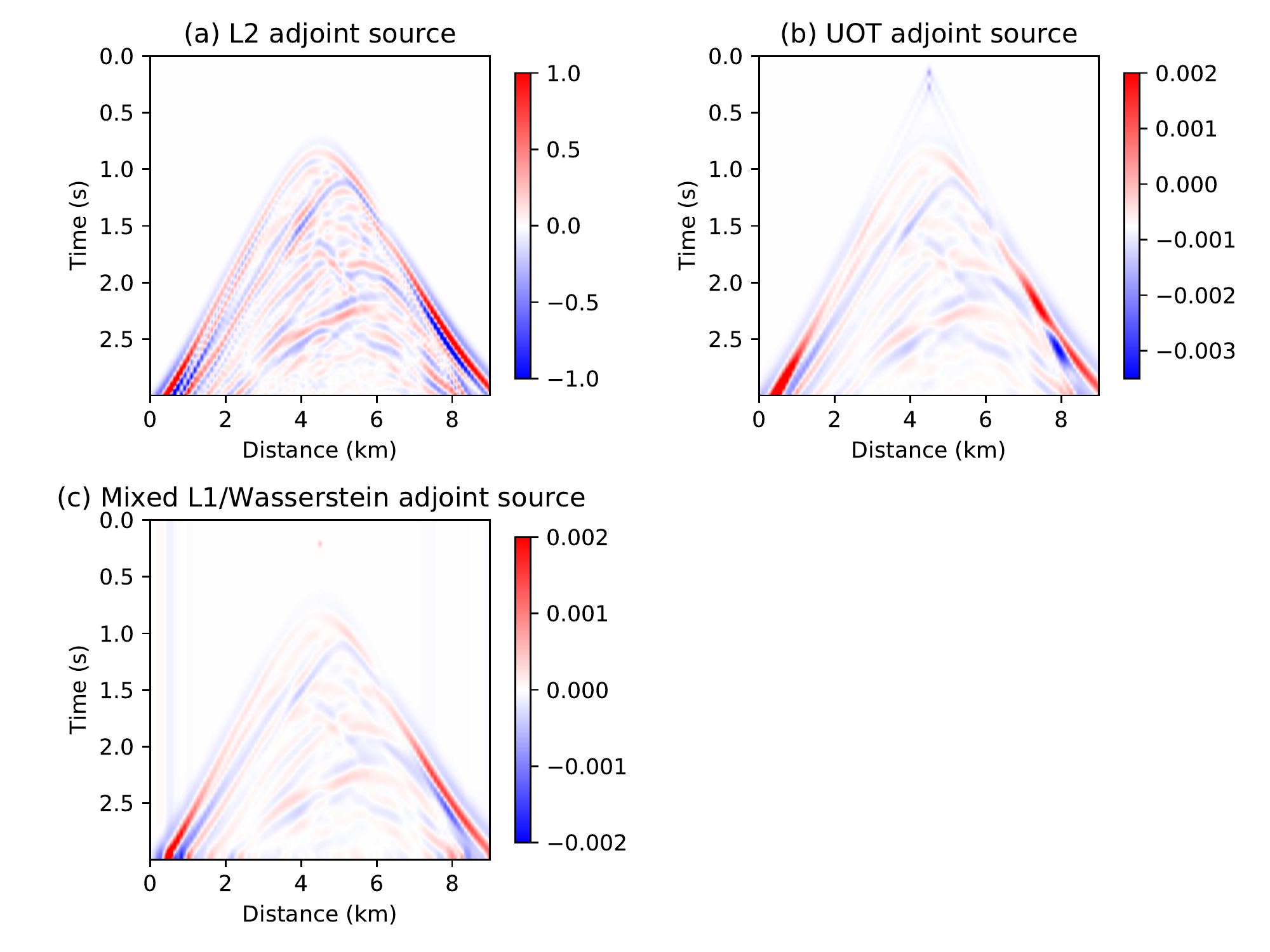}
  \caption{(a), (b), (c): The $6$-th adjoint source at the first iteration with L2 distance, UOT distance, and mixed L1/Wasserstein distance.}
  \label{fig:ch4_ex4_adj}
\end{figure}

\begin{figure}[h!]
  \centering
  \includegraphics[width=0.8\textwidth]{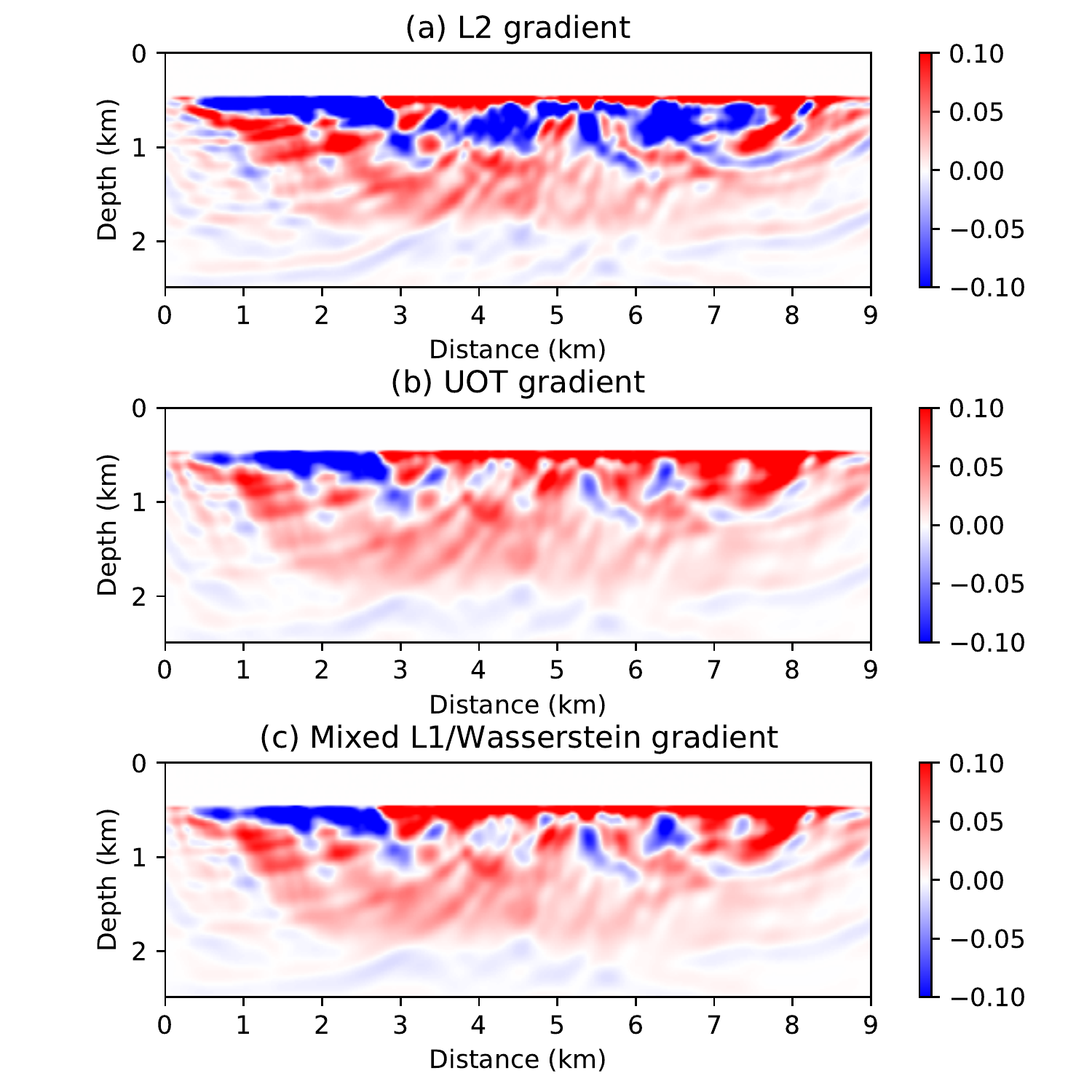}
  \caption{(a), (b), (c): The gradient at the first iteration with L2 distance, UOT distance, and mixed L1/Wasserstein distance.}
  \label{fig:ch4_ex4_gradient}
\end{figure}

Figure \ref{fig:ch4_ex4_wavefield} shows snapshots of the synthetic wavefield, demonstrating the seismic wave propagating in the domain.
Figure \ref{fig:ch4_ex4_adj} shows the adjoint sources of L2 distance, UOT distance, and mixed L1/Wasserstein distance at the first iteration.
Similar to the previous example, the energy of the adjoint sources generated by the optimal transport based distance concentrates on the location of the seismic wavelet, and provides a smoothed waveform of the seismic events.
The first iteration gradients of L2 distance, UOT distance, and mixed L1/Wasserstein distance are shown in Figure \ref{fig:ch4_ex4_gradient}.
Compared to the L2 gradient, the UOT gradient and mixed L1/Wasserstein gradient provide more large-scale structures, which is more sensible on the bottom of the domain.
These large-scale structures will increase the stability of the optimization algorithm.

\begin{figure}[h!]
  \centering
  \includegraphics[width=0.8\textwidth]{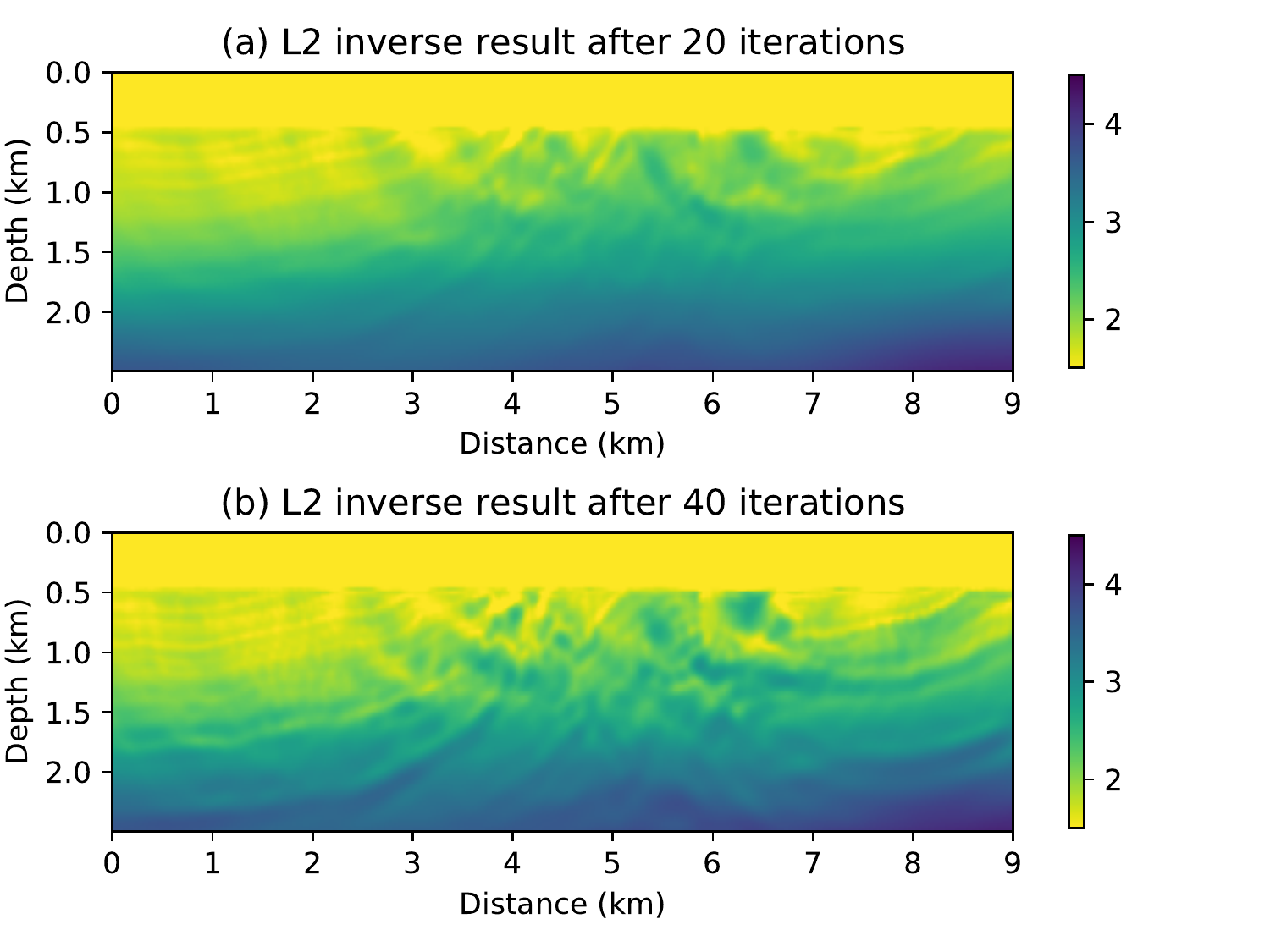}
  \caption{Nonlinear conjugate gradient inverse results with L2 distance after 20 and 40 iterations.}
  \label{fig:ch4_ex4_l2}
\end{figure}

The nonlinear conjugate gradient (NCG) method is performed to minimize the objective function with L2 distance, UOT distance, and mixed L1/Wasserstein distance.
The inverse results with L2 distance are shown in Figure \ref{fig:ch4_ex4_l2}.
The inverse result after 20 iterations and 40 iterations are shown in subfigures (a) and (b) separately.
Compared to the true velocity model, there is a velocity anomaly that exists at near depth $0.75$ km, distance $6.25$ km.
This can be explained as the cycle-skipping artifact since the velocity distribution in the initial model at this area is inaccurate compared to the true velocity model.
In this case, the L2 distance inversion failed to recover the velocity structure of the domain.

\begin{figure}[h!]
  \centering
  \includegraphics[width=0.8\textwidth]{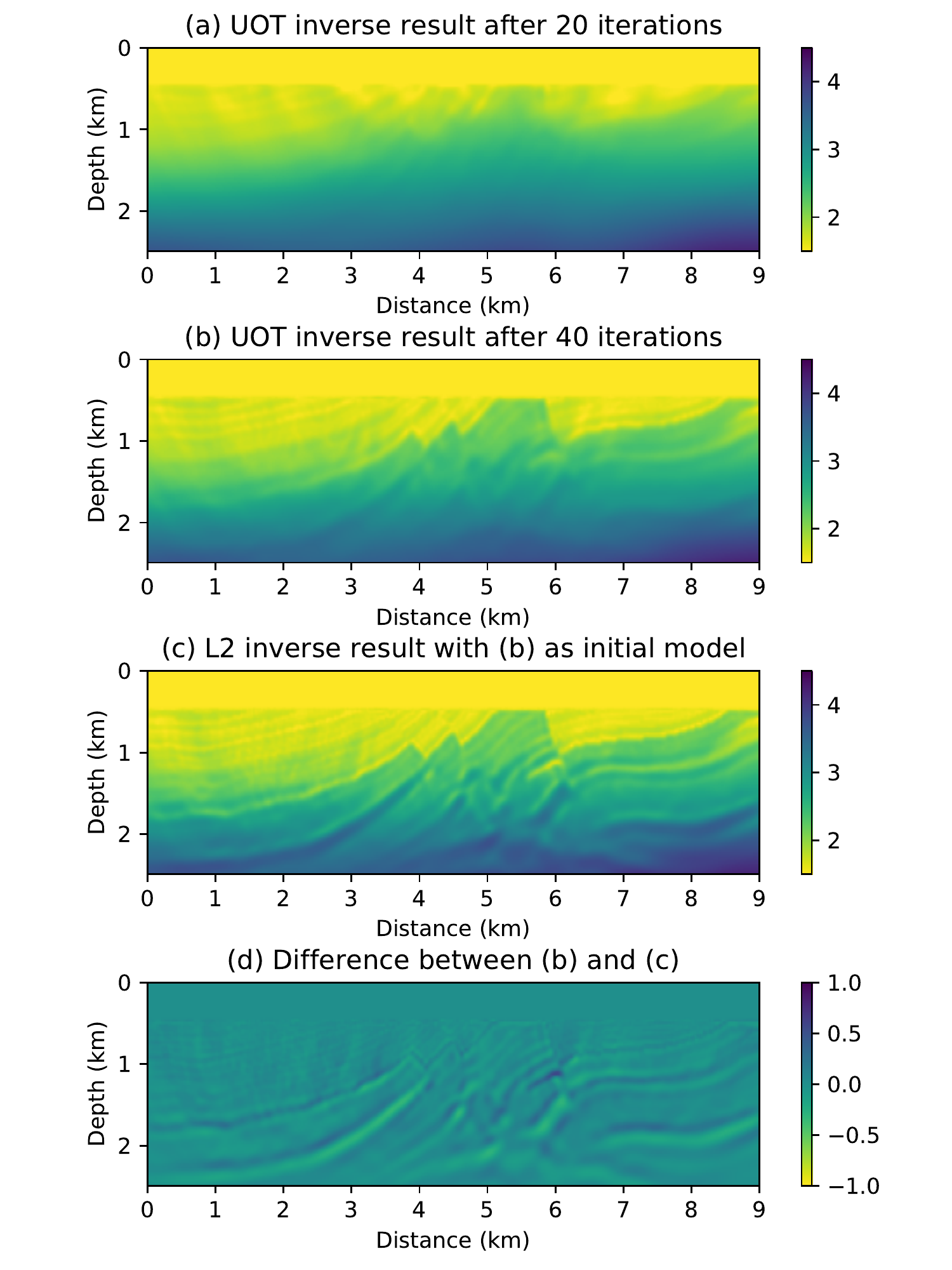}
  \caption{(a), (b): Nonlinear conjugate gradient inverse results with UOT distance after 20 and 40 iterations. (c): Nonlinear conjugate gradient inverse result with L2 distance and (b) as the initial model after 80 iterations. (d): The difference between (b) and (c).}
  \label{fig:ch4_ex4_uot}
\end{figure}

\begin{figure}[h!]
  \centering
  \includegraphics[width=0.8\textwidth]{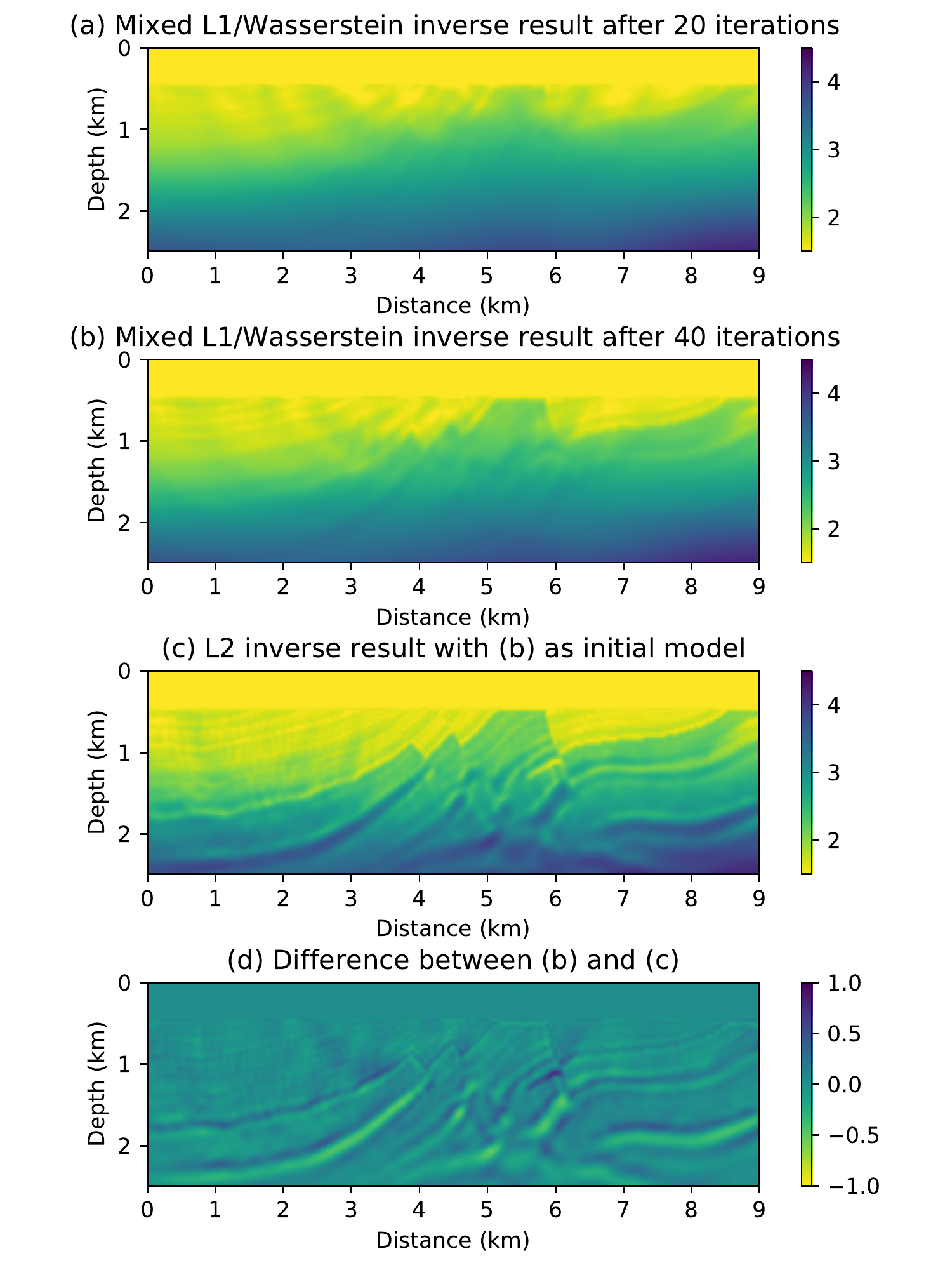}
  \caption{(a), (b): Nonlinear conjugate gradient inverse results with mixed L1/Wasserstein distance after 20 and 40 iterations. (c): Nonlinear conjugate gradient inverse result with L2 distance and (b) as the initial model after 80 iterations. (d): The difference between (b) and (c).}
  \label{fig:ch4_ex4_mixed}
\end{figure}

The inverse results of UOT distance and mixed L1/Wasserstein distance are provided in Figure \ref{fig:ch4_ex4_uot} and \ref{fig:ch4_ex4_mixed} (a) and (b).
Both optimal transport based distances recovered the structure of the true velocity model after $40$ iterations.
The evaluation of UOT distance and mixed L1/Wasserstein distance is much more expensive than the conventional L2 distance.
After the large-scale structure is accurately revealed, the optimal transport based distances can be replaced by the L2 distance in order to achieve the inverse result more efficiently.
With the inverse results of Figure \ref{fig:ch4_ex4_uot} (b) and Figure \ref{fig:ch4_ex4_mixed} (b) as the initial model, $80$ nonlinear conjugate gradient iterations are performed with the L2 distance.
The final inverse results are shown in Figure \ref{fig:ch4_ex4_uot} (c) and Figure \ref{fig:ch4_ex4_mixed} (c) with more detailed velocity structures are revealed.
The difference before and after the additional L2 nonlinear conjugate gradient iterations are shown in Figure \ref{fig:ch4_ex4_uot} (d) and Figure \ref{fig:ch4_ex4_mixed} (d).

\section{Conclusions}

The main contribution of this work is providing a different approach to introduce the optimal transport based distances to the full waveform inversion problem.
To overcome the mass equality restriction, an unbalanced optimal transport distance is introduced.
Furthermore, a mixed L1/Wasserstein distance is constructed, and the convex properties of the proposed mixed distance with respect to shift, dilation, and amplitude change are proved.
To evaluate the above optimal transport based distances, the entropy regularization and Sinkhorn algorithm are used.
Linear and exponential normalization methods that transform the signed signals into positive functions are discussed with numerical examples.
The FWI problem is formulated with both UOT distance and mixed L1/Wasserstein distance, and the computation methods of the adjoint sources are provided.


The numerical examples show that UOT distance and mixed L1/Wasserstein distance with exponential normalization can mitigate the cycle-skipping artifact efficiently compared to the conventional L2 distance.
With a poor initial model, the optimal transport based distances can provide a more accurate update step and increase the stability of the optimization algorithm.
Compared to the optimal transport based distances, L2 distance objective function is sensitive to the initial model but can be evaluated efficiently.
In practice, the inverse problem can be solved by two steps.
First, use UOT distance or mixed L1/Wasserstein distance objective function in the first few iterations to improve the initial model.
Then, use L2 distance objective function in the following iterations to increase the resolution of the inverse result.

There are two reasons that might explain the better performance of the optimal transport based distances.
First, the velocity anomaly between the true velocity model and the initial velocity model changes the arriving time of the seismic event and dilates the shape of the seismic wavelet.
Based on the previous discussion, the optimal transport based distances have more convex behavior compared to the L2 distance with respect to time-shift and dilation.
Another reason is the adjoint sources generated by the optimal transport based distances have less high-frequency components, so the update steps have more large-scale components based on the adjoint state method.
This will decrease the nonlinearity of the optimization problem.

There are different approaches that introducing optimal transport based distance to the FWI problem as we discussed in the introduction.
Compared with some previous approaches, the proposed work has advantages such as providing more convex behavior with respect to the time-shift, providing more regular and consistent adjoint sources.
These are only demonstrated by the numerical experiments.
Sophisticated and rigorous mathematical analysis of different approaches is still expected.

\section{Acknowledgement}


The first author thanks the China Scholarship Council (CSC) for supporting the research. 
The work of the first and second author was funded by NSERC (Natural Science and Engineering Research Council of Canada) through the grant CRDPJ 461179-13 and the Discovery grant RGPIN-2015-06038, RGPIN-2020-04561.
The work of the first and third author was funded by NSERC through the grant CRDPJ 532227-18 and the Discovery grant RGPIN-2019-04830.
The authors would like to thank the industrial sponsors of the Consortium for Research in Elastic Wave Exploration Seismology (CREWES) consortium, and their support through NSERC grant CRDPJ 461179-13.


\bibliographystyle{plain}
\bibliography{references}
\end{document}